\newcommand{\theoname}{Theorem}
\newcommand{\lemmname}{Lemma}
\newcommand{\coroname}{Corollary}
\newcommand{\propname}{Proposition}
\newcommand{\definame}{Definition}
\newcommand{\remkname}{Remark}
\newcommand{\explname}{Example}
\theoremstyle{plain}
\newtheorem{theorem}{\theoname}[section]
\newtheorem{corollary}[theorem]{\coroname}
\newtheorem{proposition}[theorem]{\propname}
\theoremstyle{definition}
\newtheorem{remark}[theorem]{\remkname}
\DeclareMathOperator{\diff}{d\!}
\DeclarePairedDelimiter{\abs}{\lvert}{\rvert}
\newcommand{\suchthat}{\ifnum\currentgrouptype=16 \mathrel{}\middle|\mathrel{}\else\mid\fi}
\numberwithin{figure}{section}
\newcommand{\customlabel}[2]{%
   \protected@write \@auxout {}{\string \newlabel {#1}{{#2}{\thepage}{#2}{#1}{}} }%
   \hypertarget{#1}{#2}
}
\begin{document}

\setlength{\parskip}{1pt plus 1pt minus 1pt} 
\newlist{hypotheses}{enumerate}{1}

\setlist[enumerate, 1]{label={\textnormal{(\alph*)}}, ref={(\alph*)}, leftmargin=*}
\setlist[enumerate, 2]{label={\textnormal{(\roman*)}}, ref={(\roman*)}}
\setlist[description, 1]{leftmargin=0pt, itemindent=*}
\setlist[itemize, 1]{label={\textbullet}, leftmargin=0pt, itemindent=*}
\setlist[hypotheses]{label={\textup{(H\arabic*)}}, leftmargin=*}

\pagenumbering{arabic}

\title[Dynamic resource allocation in eukaryotic Resource Balance Analysis]{Dynamic resource allocation in eukaryotic Resource Balance Analysis}
\author{Saeed Sadeghi Arjmand}
\address{MaIAGE, INRAe, CNRS, Université Paris-Saclay, 78350, Jouy en Josas, France
(Research initiated during a postdoctoral appointment)}
\email{saeed.ecolepolytechnique@gmail.com}
\urladdr{https://saeed-sadeghi-arjmand.jimdosite.com/}


\begin{abstract}
    Resource Balance Analysis (RBA) is a framework for predicting steady-state cellular growth under resource constraints.
    However, classical RBA formulations are static and do not capture the dynamic regulation of biosynthetic resources or macromolecular turnover, which is particularly important in eukaryotic cells. In this work, we propose a dynamic extension of eukaryotic RBA based on an optimal control formulation. Cellular growth is modeled as the result of a time-dependent allocation of translational capacity between metabolic enzymes and macromolecular machinery, aimed at maximizing biomass accumulation over a finite time horizon. Using Pontryagin’s Maximum Principle, we characterize optimal allocation strategies and show that steady-state RBA solutions arise as limiting regimes of the dynamic problem.
\end{abstract}

\keywords{Resource Balance Analysis, macromolecular turnover, convex optimization, optimal control, eukaryotic cells}

\subjclass[2020]{49J15, 90C05, 92C42}

\maketitle
\hypersetup{pdftitle={Dynamic resource allocation in eukaryotic Resource Balance Analysis}, pdfauthor={Saeed Sadeghi Arjmand}}
\tableofcontents

\section{Introduction}

Microbial growth constitutes a paradigmatic example of biological self-replication. According to the work \cite{Schaechter2006}, microorganisms are able to convert environmental nutrients into new cellular material at remarkable rates and with a high degree of reproducibility. The biochemical networks underlying this process have evolved under strong selective pressure, favoring cellular strategies that enhance reproductive fitness. Understanding microbial growth from a mechanistic and evolutionary perspective therefore remains a central challenge in systems biology (see, e.g.\@ \cite{Neidhardt1999}). Beyond fundamental interest, as in \cite{Brul2007, Brauner2016, Liao2016}, the ability to predict and control cellular growth is crucial for numerous applications, including antimicrobial strategies, food preservation, and biotechnological production. A fruitful perspective on cellular growth is to interpret it as a resource allocation problem (see, e.g.\@\cite{Scott2010}). Cells must distribute limited resources among competing cellular functions, such as nutrient uptake and metabolism, macromolecular synthesis, maintenance, and regulation. It is commonly assumed that, under stable environmental conditions, microorganisms have evolved allocation strategies that maximize their growth rate, thereby conferring a competitive advantage. This viewpoint has led to the development of simplified mathematical models that capture the essential trade-offs governing cellular growth while avoiding a full description of the underlying biochemical complexity.

Early resource allocation models were formulated as low-dimensional nonlinear dynamical systems, typically involving a small number of macroscopic processes and parameters derived from experimental data (see, e.g.\@ \cite{Scott2010, VanDenBerg2002, Molenaar2009, Weisse2015, Giordano2016, Erickson2017, Dourado2020}). Despite their simplicity, the models in the works~\cite{Scott2010, Molenaar2009, Giordano2016, Scott2014, Mairet2021}, successfully explained key steady-state relationships between growth rate and cellular composition, notably the dependence of ribosomal content on growth conditions. They also highlighted fundamental trade-offs between metabolic efficiency and growth yield, particularly in the context of alternative energy-producing pathways as in \cite{Molenaar2009, Basan2015, Maitra2015}. Moreover, dynamic versions of such models enabled the formulation of optimal control problems describing cellular adaptation to environmental changes, whose solutions were analyzed using Pontryagin’s Maximum Principle (PMP for short)(see, e.g.\@ \cite{Giordano2016, Carlson1991, Yegorov2018}). These studies revealed structured optimal strategies, including bang–bang controls and turnpike phenomena, and provided mechanistic interpretations of experimentally observed growth responses, such as those from \cite{Izard2015, Yegorov2019, YaboCDC2019, YaboIFAC2020, YaboMBE2020, Ewald2017}.

In parallel, constraint-based modeling approaches grounded in  resource allocation principles have emerged as powerful tools for genome-scale analysis in \cite{GoelzerCDC2009, GoelzerAutomatica2011}. These models formalize the interactions between metabolism, macromolecular synthesis, and cellular structure as systems of linear equality and inequality constraints. For a fixed growth rate, feasibility of these constraints defines a convex polyhedral set, leading to linear optimization problems that can be solved efficiently even at genome scale in \cite{BenTalNemirovski2001, Nesterov2004}. Up to our knowledge, the first genome-scale framework explicitly integrating resource allocation, known as Resource Balance Analysis (RBA for short), was developed between 2009 and 2011 in the works \cite{GoelzerCDC2009 ,GoelzerAutomatica2011}. Its predictive power was demonstrated through the construction and experimental validation of an RBA model for \emph{Bacillus subtilis} in the work~\cite{GoelzerEtAl2015}. Subsequent developments and related frameworks confirmed resource allocation as a fundamental design principle of cellular organization (see, e.g.\@\cite{MoriEtAl2016, OBrienEtAl2013, ReimersEtAl2017, GoelzerFromion2017}).

While classical RBA focuses on steady-state growth, real cells operate in dynamic environments and continuously adapt their resource allocation over time. Bridging steady-state RBA with dynamic optimal control theory therefore constitutes a natural and important extension of the framework. However, such an extension raises several theoretical challenges, including the treatment of macromolecular turnover, the preservation of convexity and tractability, and the rigorous connection between dynamic optimization and steady-state growth predictions. Building on existing RBA formulations that already incorporate macromolecular turnover, we revisit this extension from a theoretical and optimization-oriented perspective. Here, macromolecular turnover refers to the continuous degradation, renewal, and replacement of intracellular components, such as enzymes, ribosomes, and other protein complexes, which is essential for maintaining cellular homeostasis, adapting to environmental perturbations, and preserving functional integrity over time. Using the same modeling assumptions and notation as in the original framework of~\cite{GoelzerEukaryotic750182}, we reformulate metabolite and protein turnover in a way that makes explicit its impact on feasibility, convexity, and growth-rate optimization. In particular, we show that, under biologically reasonable assumptions, the inclusion of turnover preserves the linear--convex structure of the RBA feasibility problem.

Beyond steady-state descriptions, several works have emphasized the importance of dynamic resource allocation for understanding cellular adaptation and growth transitions, for instance in the series of works~\cite{YaboCDC2019, YaboMBE2020, YaboSIAM2022, YaboIFAC2020}. These approaches often rely on optimal control formulations in which cellular resources are redistributed over time in response to environmental or intracellular constraints. However, the connection between such dynamic optimization frameworks and the steady-state RBA formalism has remained largely implicit. In particular, a rigorous interpretation of steady-state RBA solutions as the outcome of an underlying dynamic optimization process is still missing, especially in the presence of macromolecular turnover and compartmentalized eukaryotic structures. Motivated by these observations, we develop in this work a unified mathematical framework that connects RBA with optimal control theory. Rather than prescribing a fixed growth rate, we interpret growth as the outcome of a time-dependent allocation of intracellular resources, governed by synthesis, degradation, and dilution mechanisms. In the dynamic formulation considered in this work, the control variable represents the fraction of the cellular translational capacity allocated over time between the synthesis of metabolic enzymes and the synthesis of other macromolecular machines. The associated optimal control problem therefore models how cells dynamically arbitrate between immediate metabolic performance and long-term cellular infrastructure. This perspective provides a dynamic foundation for resource allocation models and allows steady-state RBA solutions to be interpreted as asymptotic regimes of an underlying dynamic optimization problem.

From a theoretical perspective, we first show that the incorporation of metabolite and protein turnover preserves the linear-convex structure of the RBA feasibility problem. In particular, for a fixed growth rate, the resulting feasibility problem remains a linear program, and fundamental structural properties of RBA, including monotonicity with respect to the growth rate and the existence of a finite maximal growth rate, are preserved. We then revisit the extension of the RBA framework to eukaryotic cells, following the formulation proposed in~\cite{GoelzerEukaryotic750182}, where intracellular compartmentalization introduces additional modeling complexity. We provide a systematic derivation of compartment-level constraints and identify the assumptions required to preserve linearity and convexity. In particular, we show that, despite the presence of organelles and intracellular interfaces, the eukaryotic RBA problem remains a convex feasibility problem for fixed growth rate and therefore remains computationally tractable. Finally, our main contribution is the formulation and analysis of a dynamic optimal control problem governing cellular resource allocation. Using PMP, we derive first-order necessary conditions for optimality, characterize the structure of optimal allocation strategies, and show that the corresponding controls are generically of bang--bang type, with possible singular arcs. Moreover, we interpret steady-state RBA solutions as asymptotic envelopes of an underlying dynamic optimization process, thereby establishing a rigorous connection between dynamic resource allocation and classical growth-optimal RBA.

The paper is organized as follows. We first recall the classical RBA framework for prokaryotic cells and introduce the necessary notation. We then extend the model with metabolite and protein turnover impacts, and analyze the resulting mathematical properties. Next, we derive the RBA formulation for eukaryotic cells and discuss the impact of compartmentalization. We subsequently introduce a dynamic optimal control formulation, establish the PMP for RBA dynamics, and illustrate the theory on a minimal example highlighting the connection between dynamic allocation and steady-state growth, in particular, the optimal control formulation provides a dynamic interpretation of the RBA framework. Finally, we conclude with a broader discussion of the main theoretical and methodological contributions of this work, the principal modeling assumptions and limitations of the proposed framework, as well as several promising directions for future mathematical, computational, and biological developments.

\section{Modeling framework and notation} 
The cellular entities considered throughout this work are presented below, in which they are organized into distinct classes corresponding to metabolic, macromolecular, and structural components of the cell. Notice that in the RBA framework, the term \emph{molecular machine} refers to any macromolecular cellular component that actively catalyzes, transports, or supports a specific intracellular process, and whose synthesis requires cellular resources. Typical examples include metabolic enzymes, membrane transporters, ribosomes, chaperones, and other protein complexes involved in biosynthesis or cellular maintenance. 
\begin{enumerate}
    \item We consider a total of $N_y = N_m + N_p$ molecular machines, consisting of two distinct classes.
    The first class consists of $N_m$ metabolic enzymes and transporters involved in the metabolic network. 
    These molecular machines are denoted by $\mathbb E = (\mathbb E_1, \dotsc, \mathbb E_{N_m})$,
    with corresponding concentrations $E = (E_1, \ldots, E_{N_m})^T$,
    and associated reaction fluxes $\nu = (\nu_1, \dotsc, \nu_{N_m})^T$.
    The second class comprises $N_p$ macromolecular machines involved in non-metabolic cellular processes, 
    such as the translation apparatus. These macromolecules are denoted by $\mathbb M = (\mathbb M_1, \dotsc, \mathbb M_{N_p})$,
    with concentrations $ M = (M_1, \dots, M_{N_p})^T$.
    \item We consider a set of $N_g$ proteins $\mathbb P_G = (\mathbb P_{G_1}, \dotsc, \mathbb P_{G_{N_g}})$,
    for which the associated cellular processes are not explicitly specified. 
    The corresponding vector of concentrations is denoted by $ P_G = (P_{G_1}, \dotsc, P_{G_{N_g}})^T$.
    \item We consider a set of $N_s$ metabolites $\mathbb S = (\mathbb S_1, \dotsc, \mathbb S_{N_s})$,
    with corresponding concentrations $ S = (S_1, \ldots, S_{N_s})^T$.
    Among these metabolites, we distinguish a subset $ \mathbb B = (\mathbb B_1, \dotsc, \mathbb B_{N_b}) \subset \mathbb S$ of metabolites whose intracellular concentrations are assumed to remain fixed. The corresponding concentration vector is denoted by $ B = (B_1, \ldots, B_{N_b})^T$.
\end{enumerate}

\begin{figure}[t]
    \centering
    \includegraphics[width=\textwidth]{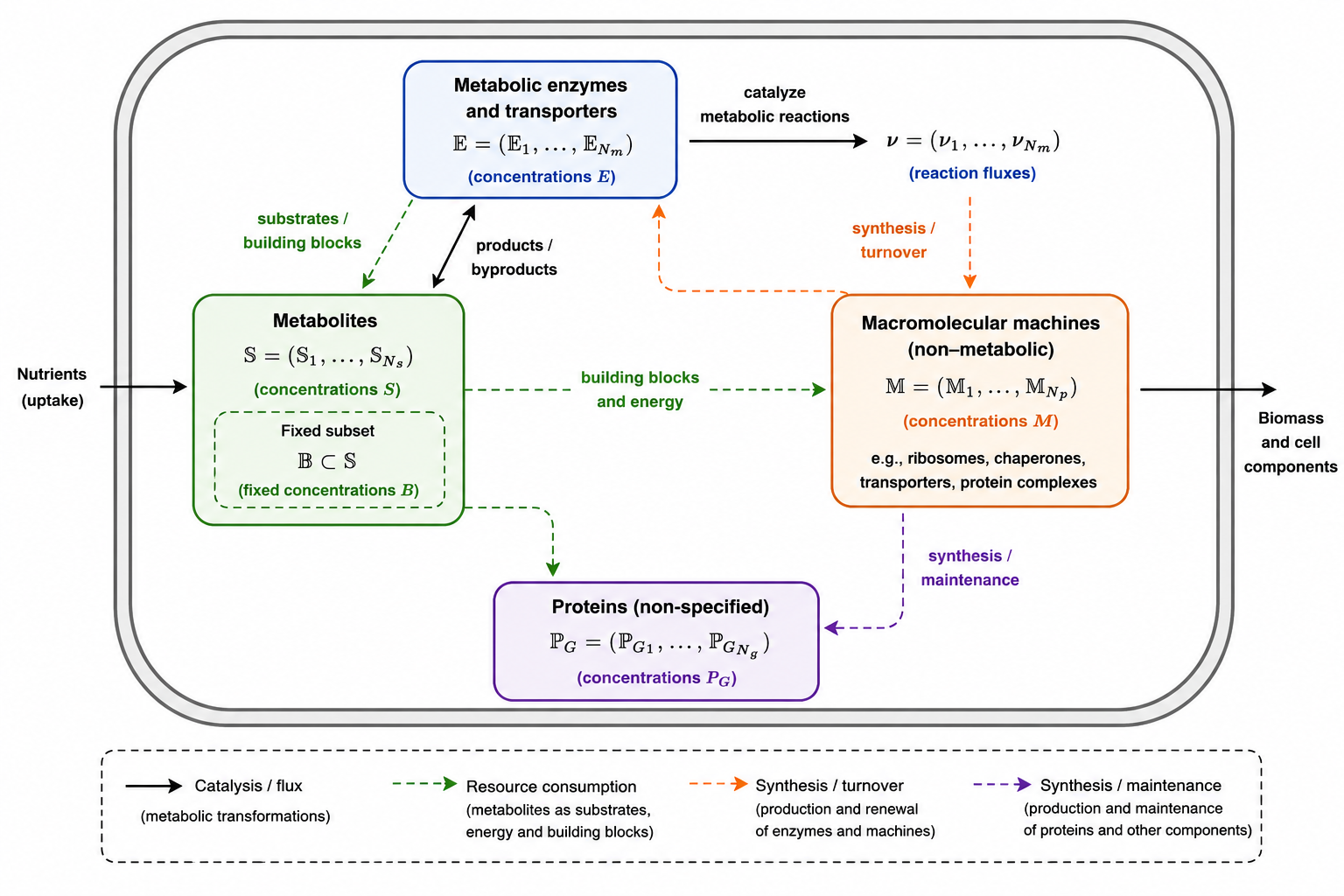}
    \caption{
    Schematic representation of the molecular entities
    and resource flows considered in the RBA framework.
    Metabolic enzymes $\mathbb E$ catalyze metabolic reactions
    with associated fluxes $\nu$ acting on metabolites $\mathbb S$.
    Macromolecular machines $\mathbb M$ support non-metabolic
    processes such as translation and protein maturation,
    thereby enabling the synthesis and maintenance
    of enzymes, structural proteins $\mathbb P_G$,
    and other cellular components.
    A subset $\mathbb B \subset \mathbb S$
    corresponds to metabolites whose concentrations
    are assumed to remain fixed.
    }
    \label{fig:cell metabolic flux}
\end{figure}

The relationships between these cellular entities are schematically illustrated in Figure~\ref{fig:cell metabolic flux}. The cardinalities introduced above are determined by the biological scope of the considered cellular model. In particular, the total number of molecular machines satisfies $N_y = N_m + N_p$, while the number of metabolites with fixed concentrations satisfies $N_b \le N_s$ since $\mathbb B \subset \mathbb S$. Apart from these structural relationships, the quantities $N_m$, $N_p$, $N_g$, $N_s$, and $N_b$ are considered independent model parameters, whose values depend on the level of biological detail and on the reconstructed cellular network. In particular, there is no requirement for the number of proteins in $\mathbb P_G$ to coincide with the number of metabolic enzymes or molecular machines.

To ensure a one-to-one correspondence between metabolic reactions and catalytic entities, we adopt the following modeling conventions. First, when a metabolic reaction can be catalyzed by two or more distinct enzymes (isoenzymes), the reaction is duplicated accordingly. For example, if a reaction $r$ can be catalyzed by enzymes $\mathbb E_1$ or $\mathbb E_2$, we introduce two reactions $r_1$ and $r_2$, catalyzed by $\mathbb E_1$ and $\mathbb E_2$, respectively. Both reactions share identical stoichiometry, but are associated with distinct catalytic capacity constraints reflecting the specific efficiencies of the corresponding enzymes. Second, when an enzymatic complex $\mathbb E_i$ is able to catalyze two or more distinct reactions, the complex is duplicated accordingly. More precisely, if $\mathbb E_i$ catalyzes reactions $r_1$ and $r_2$, we introduce two enzymatic entities $\mathbb E_{i_1}$ and $\mathbb E_{i_2}$, which catalyze $r_1$ and $r_2$, respectively. In this case, the translation process must produce both $\mathbb E_{i_1}$ and $\mathbb E_{i_2}$, thereby accounting explicitly for the resource cost associated with supporting multiple catalytic functions. These conventions allow us to associate each metabolic flux with a unique catalytic entity, leading to enzyme capacity constraints of the form $\lvert \nu_j \rvert \le k_j Y_j$ for each reaction $j$.
Importantly, this reformulation does not alter the biological interpretation of the model, but it simplifies the mathematical structure of the constraints

Under the previous modeling assumptions, the RBA feasibility problem for a prokaryotic cell, denoted by $\mathcal P_p(\mu)$, can now be formulated as follows. For a fixed vector of protein concentrations $P_G \in \mathbb{R}^{N_g}_{\ge 0},$ and a given growth rate $\mu \geq 0$, representing the amount of biomass produced per cell and per hour,
\looseness=1
\[
\mathcal P_p(\mu) \colon \left\{ 
\begin{aligned}
    &\text{ find } Y\in \mathbb R^{N_y}, \nu \in \mathbb R^{N_m},  \\
    &\text{ subject to } \\ 
    &\Romannum{1}. \quad \Omega\nu + \mu \big(C_Y^S Y+ C_B^S B + C_G^S P_G\big) = 0, \\
    &\Romannum{2}. \quad  \mu \big(C_Y^{M} Y + C_G^{M} P_G\big) \le K_{T} Y, \\
    &\Romannum{3}. \quad \abs{\nu} \le K_E Y, \\
    &\Romannum{4}. \quad C_Y^{D} Y + C_G^{D} P_G \le \bar D.
\end{aligned} \right.
\]

Each of the above constraints has a direct biological interpretation. Constraint~\textnormal{(I)} expresses the metabolic mass-balance condition: the metabolic fluxes must supply all metabolites required for biomass production and macromolecular synthesis at the prescribed growth rate $\mu$. Constraint~\textnormal{(II)} represents macromolecular synthesis capacity: the production of enzymes, molecular machines, and structural proteins cannot exceed the translational and biosynthetic capacities of the corresponding macromolecular machinery. Constraint~\textnormal{(III)} corresponds to enzyme capacity constraints, ensuring that each metabolic flux remains bounded by the catalytic efficiency and concentration of its associated enzyme. Finally, constraint~\textnormal{(IV)} imposes density limitations, reflecting physical restrictions such as cytosolic crowding, membrane occupancy, or compartment-specific volume constraints. 

All constraints appearing in the feasibility problem $\mathcal P_p(\mu)$ are understood componentwise, for which the vector of molecular machine concentrations is defined as $Y^T = \big( E^T , M^T \big)$.  The matrix $\Omega \in \mathbb{R}^{N_s \times N_m}$ denotes the stoichiometric matrix of the metabolic network, where the entry $\Omega_{ij}$ corresponds to the stoichiometric coefficient of metabolite $\mathbb S_i$ in the $j$-th enzymatic reaction. The matrix $C^S_{Y} \in \mathbb{R}^{N_s \times N_y}$ (resp. $C_S^{G} \in \mathbb{R}^{N_s \times N_g}$) collects the metabolite requirements for the synthesis of molecular machines $\mathbb Y$ (resp. proteins $\mathbb P_G$). Each coefficient $C^{S}_{Y_{ij}}$ (resp. $C^{S}_{G_{ij}}$) represents the number of molecules of metabolite $\mathbb S_i$ consumed or produced during the synthesis of one unit of $\mathbb Y_j$ (resp. $\mathbb P_{G_j}$). Accordingly, $C^{S}_{Y_{ij}}$ is positive, negative, or zero if $\mathbb S_i$ is produced, consumed, or not involved in the synthesis of $\mathbb Y_j$ (resp. $\mathbb P_{G_j}$).

Similarly, $C^S_{B} \in \mathbb{R}^{N_s \times N_b}$ denotes the matrix of metabolite requirements for the synthesis of metabolites with fixed concentrations. Each coefficient $C^{S}_{B_{ij}}$ corresponds to the number of molecules of metabolite $\mathbb S_i$ consumed or produced for the synthesis of metabolite $\mathbb B_j$. The matrices $K_T \in \mathbb{R}^{N_p \times N_p}$ and $K_E \in \mathbb{R}^{N_m \times N_m}$ are diagonal matrices. Each diagonal coefficient $K_{T_i}$ represents the efficiency of macromolecular machine $\mathbb M_i$, defined as the rate of the associated cellular process per unit concentration of $\mathbb M_i$. Similarly, $K_{E_i}$ correspond to the forward and backward catalytic efficiency of enzyme $\mathbb E_i$, respectively. The matrices $C^M_{Y} \in \mathbb{R}^{N_p \times N_y}$ and $C^M_{G} \in \mathbb{R}^{N_p \times N_g}$ describe macromolecular synthesis requirements. Each coefficient $C^{M}_{Y_{ij}}$ (resp. $C^{M}_{G_{ij}}$) typically represents the length, in amino-acid residues, of molecular machine $\mathbb Y_j$ (resp. protein $\mathbb P_{G_j}$). In specific cases, such as constraints related to protein chaperoning, these lengths may be scaled by additional factors, for instance the fraction of the proteome requiring chaperone assistance. The matrix $\bar D \in \mathbb{R}^{N_c}$ denotes the vector of density limits, where $N_c$ is the number of volume or surface constraints considered. Each component $\bar D_i$ represents the maximal allowable density of molecular entities with respect to a given volume or surface area. Densities are typically expressed as numbers of amino-acid residues per unit volume or per unit surface area.

Finally, the matrices $C^D_{Y} \in \mathbb{R}^{N_c \times N_y}$ and $C^D_{G} \in \mathbb{R}^{N_c \times N_g}$ encode density contributions of molecular entities within cellular compartments. Each coefficient $C^{D}_{Y_{ij}}$ (resp. $C^{D}_{G_{ij}}$) represents the density contribution of one unit of molecular machine $\mathbb Y_j$ (resp. protein $\mathbb P_{G_j}$) in compartment $i$. By construction, each molecular machine is assigned to a unique compartment, so that for each $j$ there exists a single index $i$ such that $C^{D}_{Y_{ij}}$ (resp. $C^{D}_{G_{ij}}$) is nonzero.

\begin{remark}
In practice, the concentration vector $B = (B_1,\dots,B_{N_b})^T$, associated with the fixed metabolites in $\mathbb B$, contains non-zero entries only for the concentrations of macromolecular components such as DNA, cell wall constituents, and lipid membranes, as well as for a limited set of metabolites whose concentrations are assumed to be fixed. Typical examples can be found in the supplementary material of~\cite{GoelzerEtAl2015}. To model reversible enzymatic reactions, two diagonal matrices of enzyme efficiencies, $K_E$ and $K_E^{0}$, are introduced to describe catalytic capacity constraints in the forward and backward directions, respectively. When an enzyme is assumed to be irreversible, the efficiency associated with the backward direction is set to zero by convention. In~\cite{GoelzerAutomatica2011, GoelzerEtAl2015}, the RBA model developed for \emph{Bacillus subtilis} incorporates two macromolecular processes in constraint~\textnormal{(II)}, namely protein translation and chaperone-assisted folding. Moreover, two density constraints are considered, corresponding to limitations on cytosolic density and membrane occupancy. The RBA framework can be further refined by incorporating additional molecular machines or by introducing new classes of constraints, such as transcriptional machinery, protein secretion systems, or other cellular processes (see, for instance~\cite{GoelzerEtAl2015}).
\end{remark}

\section{Theoretical properties of prokaryotic RBA}\label{Pre-Result-on-RBA}

We now discuss fundamental theoretical properties of the RBA optimization problem $\mathcal P_p(\mu)$, namely convexity, monotonicity with respect to the growth rate, and existence of feasible solutions. For a fixed growth rate $\mu \ge 0$, the problem $\mathcal P_p(\mu)$ is a convex feasibility problem. Indeed, all constraints defining $\mathcal P_p(\mu)$ are linear equalities or inequalities in the decision variables $(Y,\nu)$. In particular, constraint (I) is affine, constraints (II) and (IV) are linear inequalities, and constraint (III) can be rewritten as two linear inequalities. As a consequence, the feasible set \begin{equation}\label{feasible set}
\mathcal F(\mu) :=
\left\{
(Y,\nu) \in \mathbb R^{N_y}_{\ge 0} \times \mathbb R^{N_m}
\;\middle|\;
(Y,\nu) \text{ satisfies constraints (I)--(IV)}
\right\}
\end{equation}
is a closed and convex polyhedron. This property ensures that feasibility of $\mathcal P_p(\mu)$ can be tested efficiently using linear programming techniques, even at genome scale.

Let us first establish a fundamental monotonicity property of the RBA problem with respect to the growth rate.

\begin{proposition}[Monotonicity of feasibility] \label{prop:monotonicity} If the RBA problem $\mathcal P_p(\mu)$ is feasible for some $\mu \ge 0$, then $\mathcal P_p(\mu')$ is feasible for any $\mu' \in [0,\mu]$. 
\end{proposition}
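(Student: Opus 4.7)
The plan is to give an explicit construction: starting from a feasible pair $(Y,\nu)$ for $\mathcal P_p(\mu)$ with $\mu>0$, I would build a feasible pair for $\mathcal P_p(\mu')$ by keeping the same machine concentrations and rescaling only the fluxes. Concretely, set $\lambda := \mu'/\mu \in [0,1]$ and define $(Y',\nu'):=(Y,\lambda\nu)$. The case $\mu=0$ is trivial since then $\mu'=0$ and $(Y,\nu)$ is itself feasible.

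The key observation is that constraint~(I) is \emph{homogeneous in the pair $(\mu,\nu)$} once $Y$, $B$, $P_G$ are held fixed: multiplying both $\mu$ and $\nu$ by $\lambda$ rescales the whole equation by $\lambda$. More precisely, $\Omega(\lambda\nu) + (\lambda\mu)(C_Y^S Y + C_B^S B + C_G^S P_G) = \lambda\bigl[\Omega\nu + \mu(C_Y^S Y + C_B^S B + C_G^S P_G)\bigr] = 0$. So (I) is preserved by this rescaling without any sign hypothesis.

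For the remaining inequalities, I would invoke monotonicity in $\mu$ and non-negativity. Constraint~(III) gives $|\nu'| = \lambda|\nu| \le \lambda K_E Y \le K_E Y$, using $\lambda\le 1$ and $K_E Y \ge 0$ (since $K_E$ has non-negative diagonal and $Y\ge 0$). Constraint~(IV) is independent of $\mu$ and thus trivially preserved. For constraint~(II), the left-hand side $\mu(C_Y^M Y + C_G^M P_G)$ is monotone non-decreasing in $\mu$ because the coefficients of $C_Y^M,C_G^M$ represent macromolecular lengths (amino-acid residue counts, hence non-negative), and both $Y,P_G$ are non-negative; therefore $\mu'(C_Y^M Y + C_G^M P_G)\le \mu(C_Y^M Y + C_G^M P_G)\le K_T Y$.

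There is no real obstacle: the only point that requires a remark is the biological sign convention on the macromolecular-cost matrices $C_Y^M,C_G^M$ used in constraint~(II), which is already built into the modeling assumptions recalled above. Everything else follows from the homogeneity of~(I) in $(\mu,\nu)$ and the fact that (III), (IV) are either monotone or independent of $\mu$. Geometrically, this argument shows that the map $\mu \mapsto \mathcal F(\mu)$ admits a continuous selection by a piecewise-linear path in $(Y,\nu)$-space, which is a mild strengthening of the feasibility statement and will be useful later for the dynamic formulation.
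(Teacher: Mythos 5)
Your construction is exactly the one used in the paper: keep $Y$, rescale the flux to $\nu'=(\mu'/\mu)\nu$, and check (I) by homogeneity, (II) by nonnegativity of the composition matrices together with $\mu'\le\mu$, (III) by $\mu'/\mu\le 1$, and (IV) by independence of $\mu$. The argument is correct and essentially identical to the paper's proof (your remark correctly places the nonnegativity assumption on the matrices in (II), and the $\mu=0$ convention matches the paper's).
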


\begin{proof}
Let $\mu \ge 0$ be such that $\mathcal P_p(\mu)$ is feasible, and let $(Y,\nu)$ be a feasible solution. For any $\mu' \in [0,\mu]$, define
\[
\nu' := \frac{\mu'}{\mu}\,\nu,
\]
with the convention that $\nu' = 0$ if $\mu = 0$.
We verify that $(Y,\nu')$ satisfies all constraints of $\mathcal P_p(\mu')$.
Regarding the constraint~\textnormal{(I)}, since $(Y,\nu)$ satisfies
\[
\Omega \nu + \mu \big( C_Y^{S} Y + C_B^{S} B + C_G^{S} P_G \big) = 0,
\]
we observe that by multiplying this equality by $\mu'/\mu$ yields
\[
\Omega \nu'
+ \mu' \big( C_Y^{S} Y + C_B^{S} B + C_G^{S} P_G \big) = 0,
\]
so constraint~\textnormal{(I)} holds for $\mu'$. Moreover, since all matrices, involved in constraint~\textnormal{(I)}, have nonnegative entries and $\mu' \le \mu$, we deduce
\[
\mu' \big( C_Y^{M} Y + C_G^{M} P_G \big)
\le
\mu \big( C_Y^{M} Y + C_G^{M} P_G \big)
\le K_T Y,
\]
and thus the second constraint is also satisfied. We then observe that
\[
|\nu'|
=
\frac{\mu'}{\mu} |\nu|
\le
\frac{\mu'}{\mu} K_E Y
\le K_E Y,
\]
which yields the fact that the enzyme capacity constraint (i.e.\@ constraint~\textnormal{(III)}) remains satisfied.
We finally notice that the constraint~\textnormal{(IV)} does not depend on $\mu$ and it is therefore unchanged. Thus, $(Y,\nu')$ is feasible for $\mathcal P_p(\mu')$, which concludes the proof.
\end{proof}

An immediate consequence of Proposition~\ref{prop:monotonicity} is that the set of feasible growth rates forms an interval of the form $[0,\mu_{\max}]$ or $[0,\mu_{\max})$, where $\mu_{\max}$ denotes the maximal achievable growth rate. As discussed later in Corollary~\ref{cor:mu_max} and Remark~\ref{closed growth-rate interval}, the feasible-growth set is in fact closed under the present RBA assumptions, and therefore the interval becomes $[0,\mu_{\max}]$. Under mild biological assumptions, the RBA problem admits feasible solutions for sufficiently small growth rates. In particular, for $\mu = 0$, choosing 
\[
\nu = 0,
\qquad
Y = 0
\]
satisfies the series of constraints (I)--(III). Constraint (IV) is also satisfied provided that the fixed protein concentrations $P_G$ are compatible with the density limits $\bar D$. Hence, $\mathcal P_p(0)$ is feasible whenever the basal cellular composition is physically admissible. Moreover, enzyme capacity constraints and density constraints impose upper bounds on the concentrations of molecular machines, which prevents unbounded solutions. As a result, for any fixed $\mu$, the feasible set $\mathcal F(\mu)$ is bounded. Taken together, convexity, monotonicity, and boundedness ensure that the RBA framework defines a well-posed optimization problem with a finite maximal growth rate $\mu_{\max}$. This maximal growth rate can be computed efficiently by combining linear programming with bisection or parametric optimization techniques.

\begin{corollary}[Existence of a maximal growth rate]
\label{cor:mu_max}
There exists a finite maximal growth rate
\[
\mu_{\max} := \sup \left\{ \mu \ge 0 \;\middle|\; \mathcal P_p(\mu) \text{ is feasible} \right\},
\]
such that the RBA problem $\mathcal P_p(\mu)$ is feasible for all
$\mu \in [0,\mu_{\max})$ and infeasible for all $\mu > \mu_{\max}$. Moreover, feasibility at $\mu=\mu_{\max}$ depends on whether the feasible set is closed with respect to the growth rate parameter. 
\end{corollary}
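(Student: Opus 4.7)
\noindent\emph{Proof plan.} The strategy is to combine the monotonicity result of Proposition~\ref{prop:monotonicity} with a compactness argument exploiting the uniform boundedness of the feasible set $\mathcal F(\mu)$.

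First I would set $I := \{\mu \ge 0 \mid \mathcal P_p(\mu) \text{ is feasible}\}$. The discussion preceding the statement shows that $0 \in I$ whenever $P_G$ is admissible with respect to the density limits (since $(Y,\nu)=(0,0)$ trivially satisfies constraints \textnormal{(I)}--\textnormal{(III)}), and Proposition~\ref{prop:monotonicity} ensures that $I$ is downward closed in $[0,\infty)$. Hence $I$ is an interval of the form $[0,\mu_{\max})$ or $[0,\mu_{\max}]$ with $\mu_{\max}=\sup I \in [0,+\infty]$.

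The second step is to prove that $\mu_{\max} < +\infty$. Because $Y \ge 0$ and constraint~\textnormal{(IV)} confines $Y$ to the polytope $\{Y \ge 0 \mid C_Y^D Y \le \bar D - C_G^D P_G\}$, the single-compartment assignment convention described after the statement of $\mathcal P_p(\mu)$ (each column of $C_Y^D$ has exactly one nonzero positive entry) forces a componentwise upper bound on $Y$, independent of $\mu$. This yields a constant $R>0$ with $\|Y\|\le R$ on $\mathcal F(\mu)$ for every $\mu \ge 0$, and constraint~\textnormal{(III)} then gives $\|\nu\|\le \|K_E\|\,R$. To convert these into a bound on $\mu$ itself, I would read constraint~\textnormal{(II)} componentwise: under the biologically reasonable hypothesis that maintaining the background protein pool $P_G$ has a nontrivial translational cost, i.e.\@ that $C_G^M P_G$ has at least one strictly positive entry in row $i$, the inequality $\mu\,(C_G^M P_G)_i \le (K_T Y)_i \le (K_T)_{ii}\,R$ supplies the required finite upper bound. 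If this structural hypothesis fails, one falls back on constraint~\textnormal{(I)}, noting that $\Omega\nu = -\mu\,(C_Y^S Y + C_B^S B + C_G^S P_G)$ with $\|\Omega\nu\|$ uniformly bounded forces $\mu$ to be bounded as soon as $C_B^S B + C_G^S P_G$ has a nontrivial component along some biomass direction.

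The third step is to show that the supremum is attained, so that $I = [0,\mu_{\max}]$. Take $\mu_n \nearrow \mu_{\max}$ with $\mathcal P_p(\mu_n)$ feasible, and pick $(Y_n,\nu_n) \in \mathcal F(\mu_n)$. The uniform bound from step two permits the extraction of a subsequence converging to some $(Y^\star,\nu^\star) \in \mathbb R^{N_y}_{\ge 0} \times \mathbb R^{N_m}$. Each of constraints~\textnormal{(I)}--\textnormal{(IV)} is a closed affine or linear (in)equality whose coefficients depend continuously (in fact affinely) on $\mu$, and passing to the limit $n \to \infty$ gives $(Y^\star,\nu^\star) \in \mathcal F(\mu_{\max})$.

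The main obstacle I anticipate is the finiteness step, namely deriving a genuine upper bound on $\mu$ purely from the constraints: this is the only step where a structural hypothesis enters, and formulating it cleanly requires identifying a minimal biological assumption on $C_Y^D$, $C_G^M$, or $C_B^S B + C_G^S P_G$ under which the feasible growth rates cannot escape to infinity. Once that is in hand, steps one and three reduce to invoking Proposition~\ref{prop:monotonicity} and standard closedness of polyhedra under limits.
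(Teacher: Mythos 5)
Your argument follows the same skeleton as the paper's proof: Proposition~\ref{prop:monotonicity} makes the set of feasible growth rates a downward-closed interval, feasibility at $\mu=0$ makes it nonempty, and bounds extracted from constraints \textnormal{(II)}--\textnormal{(IV)} cap $\mu$. Where you differ is in being more careful on exactly the two points the paper treats loosely, and this is worth noting. First, the paper simply asserts that arbitrarily large $\mu$ cannot satisfy \textnormal{(II)} and \textnormal{(IV)} simultaneously; you spell out the mechanism (the density constraint plus the one-compartment-per-machine convention gives $\|Y\|\le R$ uniformly in $\mu$, then \textnormal{(III)} bounds $\nu$, then a componentwise reading of \textnormal{(II)} bounds $\mu$) and you correctly observe that some structural hypothesis is genuinely needed: if $C_G^M P_G=0$ and $C_B^S B + C_G^S P_G=0$, the point $Y=0$, $\nu=0$ is feasible for every $\mu$, so finiteness of $\mu_{\max}$ does not follow from linearity alone. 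Your fallback via constraint \textnormal{(I)} is still somewhat vague (``a nontrivial component along some biomass direction'' has to mean a component that $C_Y^S Y$ cannot cancel on the bounded set $\|Y\|\le R$), but you flag this yourself, and the paper's own proof is no more rigorous at that point. Second, your compactness step --- taking $\mu_n\nearrow\mu_{\max}$, extracting a convergent subsequence of feasible points via the uniform bound, and passing to the limit in the closed constraints whose coefficients depend affinely on $\mu$ --- establishes that the supremum is attained, something the paper's proof omits entirely: the paper only obtains an interval of the form $[0,\bar\mu]$ or $[0,\bar\mu)$, whereas the corollary as stated claims feasibility on the closed interval $[0,\mu_{\max}]$, so your step three is actually required to justify the statement as written. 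Overall the proposal is correct modulo the explicit structural hypothesis you identify (which the paper assumes implicitly), and on the attainment issue it is more complete than the paper's own argument.
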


\begin{proof}
By Proposition~\ref{prop:monotonicity}, the set of feasible growth rates is an interval of the form $[0,\bar\mu]$ or $[0,\bar\mu)$ for some $\bar\mu \ge 0$, depending on whether feasibility is attained at the supremum. Feasibility at $\mu = 0$ ensures that this interval is nonempty. Furthermore, enzyme capacity constraints (III) and density constraints (IV) impose upper bounds on admissible fluxes and molecular machine concentrations. As a result, arbitrarily large values of $\mu$ cannot satisfy constraints (II) and (IV) simultaneously. Hence, the set of feasible growth rates is bounded from above, which leads us to $\mu_{\max} < +\infty$. By defining $\mu_{\max}$ as stated in the corollary, yields the desired result.
\end{proof}

\begin{remark}\label{closed growth-rate interval}
The previous corollary is stated in its most general form, allowing the feasible-growth interval to be either open or closed at its upper endpoint, depending on whether the supremum is attained. In the present RBA framework, however, for each fixed growth rate $\mu \ge 0$, the feasible set of $\mathcal P_p(\mu)$ is a closed polyhedron, and the coefficients of the constraints depend continuously on the parameter $\mu$. As a consequence, the feasible-growth set $\left\{\mu \ge 0\;\middle|\;\mathcal P_p(\mu)\text{ is feasible}\right\}$ is a closed subset of $\mathbb R_{\ge 0}$. Therefore, the supremum $\mu_{\max}$ is necessarily attained, and the interval of feasible growth rates is in fact equal to $ [0,\mu_{\max}].$ 
\end{remark}

\begin{remark}[Degeneracy and non-uniqueness of optimal solutions]
\label{rem:degeneracy}
In general, optimal solutions of the RBA problem at $\mu = \mu_{\max}$ are not unique. This non-uniqueness arises from the linear structure of the constraints and from the presence of alternative metabolic pathways, isoenzymes, or redundant molecular machines. More precisely, the feasible set $\mathcal F(\mu_{\max})$, as it is defined above in~\eqref{feasible set}, is a convex polyhedron and optimal solutions typically lie on a face of this polyhedron with dimension greater than zero. Consequently, multiple distinct flux distributions and molecular machine allocations can support the same maximal growth rate. From a biological perspective, this degeneracy reflects the existence of multiple resource allocation strategies that are equally optimal in terms of growth, allowing cells to exhibit phenotypic variability or robustness to perturbations without loss of fitness.
\end{remark}

We now show how degradation (turnover) of biological macromolecules can be integrated into the RBA framework. In principle, the approach applies to any molecular entity, including proteins, RNAs, metabolites, or macromolecular machines. For clarity, we restrict the exposition to metabolites and proteins, as other cases can be treated analogously.

Some macromolecular components of bacterial cells, such as membrane constituents or specific metabolites, are continuously damaged and must be degraded and replaced. We model this phenomenon by assuming that each metabolite $\mathbb S_j$ is degraded at a constant turnover rate $\gamma_{\mathbb S_j} > 0$. Let $S_j(t)$ denote the concentration of metabolite $\mathbb S_j$ at time $t$. Its dynamics is given by
\begin{equation*}
    \frac{\diff S_j(t)}{\diff t}
    =
    (\Omega\cdot \nu(t))_j - \bigl( \mu + \gamma_{\mathbb S_j} \bigr) S_j(t),
\end{equation*}
where $\nu_j(t)$ denotes the net metabolic production flux of $\mathbb S_j$ and $\mu$ is the cellular growth rate. At steady state, this yields
\begin{equation*}
     (\Omega\cdot \nu(t))_j = \bigl( \mu + \gamma_{\mathbb S_j} \bigr) S_j.
\end{equation*}
In particular, for metabolites $\mathbb B_j$ belonging to the subset $\mathbb B$ of compounds with fixed concentrations, we obtain
\begin{equation*}
     (\Omega\cdot \nu(t))_j = \bigl( \mu + \gamma_{\mathbb B_j} \bigr) \bar B_j,
\end{equation*}
where $\bar B_j$ denotes the prescribed steady-state concentration of $\mathbb B_j$. These additional production requirements can be directly incorporated into the metabolite balance constraints of the RBA problem. Most bacterial proteins are stable over several hours or days, with the notable exception of a limited number of proteins involved in regulatory or stress-response mechanisms. Nevertheless, even stable proteins are eventually damaged by thermal or environmental stress and are degraded by dedicated proteolytic systems.

We model now protein degradation by assigning to each molecular machine $\mathbb Y_j$ a specific turnover rate $\gamma_{\mathbb Y_j} > 0$. Let $Y_j(t)$ denote the concentration of $\mathbb Y_j$ at time $t$. Its dynamics satisfies
\begin{equation}\label{enzyme and molecular dynamics}
    \frac{\diff Y_j(t)}{\diff t}
    =
    \lambda_{\mathbb Y_j}(t) - \bigl( \mu + \gamma_{\mathbb Y_j} \bigr) Y_j(t),
\end{equation}
where $\lambda_{\mathbb Y_j}(t)$ is the synthesis flux of protein $\mathbb Y_j$ produced by the ribosome. At steady state, we obtain the relation $\lambda_{\mathbb Y_j} = \bigl( \mu + \gamma_{\mathbb Y_j} \bigr) Y_j$, which shows that protein turnover can be incorporated into the macromolecular synthesis constraints of RBA by replacing the growth rate $\mu$ with an effective rate $\mu + \gamma_{\mathbb Y_j}$ for each protein. In addition, protein degradation releases amino acids and consumes energy. More precisely, degradation of protein $\mathbb Y_j$ releases amino acids at a rate $\gamma_{\mathbb Y_j} \, \beta_{kj} Y_j$, where $\alpha_{kj}$ denotes the number of residues of amino acid $k$ in one molecule of $\mathbb Y_j$. Proteolysis also consumes ATP at a rate $\gamma_{\mathbb Y_j} \, \beta^{\prime}_{kj} Y_j$, where $\beta^{\prime}_{kj}$ denotes the number of ATP molecules required to degrade one unit of protein $\mathbb Y_j$. These contributions can be included explicitly in the metabolite balance constraints through additional source and sink terms.

\begin{remark}
For fast-growing bacteria such as \emph{Escherichia coli} or
\emph{Bacillus subtilis}, the growth rate $\mu$ is typically much larger than the turnover rates of most proteins. As a consequence, protein degradation can often be neglected without significantly affecting the predicted resource allocation obtained from RBA models.
\end{remark}

We now show that the integration of metabolite and protein turnover
does not change the convex nature of the RBA optimization problem.
As a consequence, all theoretical properties derived for the original RBA formulation, including tractability and efficient solvability, remain valid in the presence of degradation. Recall that the feasible set of the prokaryotic RBA problem $\mathcal P_p(\mu)$ is defined by a system of linear equalities and inequalities in the decision variables $(Y,\nu)$. Convexity of the feasible region follows directly from linearity of the constraints. At steady state, metabolite turnover modifies the mass-balance equations by replacing the standard growth dilution term $\mu S_j$ with an effective dilution term $(\mu + \gamma_{\mathbb S_j}) S_j$. Equivalently, the metabolite balance constraint
\[
\Omega \nu + \mu \bigl( C_Y^S Y + C_B^S B + C_G^S P_G \bigr) = 0
\]
is replaced by
\[
\Omega \nu
+ \mu \bigl( C_Y^S Y + C_B^S \bar B + C_G^S P_G \bigr)
+ \Gamma^S_Y Y + \Gamma^S_B \bar B + \Gamma^S_{P_G} P_G
= 0,
\]
where $\Gamma_Y^S \in \mathbb R^{N_s \times N_y}$, $\Gamma_B^S \in \mathbb R^{N_s \times N_b}$, $\Gamma_{P_G}^S \in \mathbb R^{N_s \times N_g}$, are diagonal (or block-diagonal) matrices encoding metabolite requirements associated with turnover of molecular machines, fixed metabolites, and unspecific proteins, respectively. Each coefficient of these matrices corresponds to the amount of metabolite released or consumed per unit time due to degradation processes. Similarly, protein turnover modifies the macromolecular synthesis constraints by replacing the growth rate $\mu$ with an effective rate $\mu + \gamma_{\mathbb Y_j}$ for each molecular machine $\mathbb Y_j$. At steady state, this yields the linear relation
\[
\lambda_{\mathbb Y_j} = (\mu + \gamma_{\mathbb Y_j}) Y_j.
\]

The above correction is essential to ensure dimensional consistency and a proper interpretation of protein turnover within the RBA framework. Indeed, turnover acts at the level of individual molecular machines and proteins, and not at the level of macromolecular processes. As a consequence, degradation-induced synthesis requirements must be incorporated inside the linear maps describing macromolecular composition. More precisely, for each molecular machine $\mathbb Y_j$ with turnover rate $\gamma_{\mathbb Y_j}$, the additional synthesis demand induced by degradation is proportional to its macromolecular composition. This contribution is encoded in the matrix $\Gamma_Y^M \in \mathbb R^{N_p \times N_y}$, whose coefficients are defined by $(\Gamma_Y^M)_{ij} = \gamma_{\mathbb Y_j} \, C^M_{Y_{ij}}$, where $C^M_{Y_{ij}}$ denotes the contribution of molecular machine $\mathbb Y_j$ to macromolecular process $i$. An analogous construction yields the matrix $\Gamma_{P_G}^M \in \mathbb R^{N_p \times N_g}$ for the turnover of proteins belonging to the pool $\mathbb P_G$. With these definitions, the macromolecular synthesis constraint with protein turnover thus satisfies
\[
\mu \bigl( C_Y^M Y + C_G^M P_G \bigr)
+ \Gamma_Y^M Y
+ \Gamma_{P_G}^M P_G
\le K_T Y.
\]
This formulation makes explicit that protein turnover introduces additional linear synthesis demands without coupling distinct macromolecular processes. In particular, no nonlinear terms are generated, and the constraint remains affine in the decision variables. Hence, the inclusion of protein degradation preserves the convexity of the feasible set of the RBA problem. Additional source and sink terms associated with protein degradation, such as amino-acid release and ATP consumption, enter the metabolite balance constraints as linear contributions proportional to $Y$. Therefore, these terms do not introduce any nonlinear coupling between decision variables.

We thus observe that all turnover-related modifications affect the RBA formulation only through linear transformations of existing constraints (i.e.\@ no bilinear or nonlinear terms are introduced). Hence, the feasible set of the RBA problem with macromolecular turnover remains a convex polyhedron. In particular, for a fixed growth rate $\mu$, the RBA feasibility problem with degradation can still be formulated as a linear programming problem.

\begin{proposition}[Convexity preservation of RBA under macromolecular turnover]
\label{prop:convexity_turnover}
Let $\mu \ge 0$ be a fixed growth rate and $P_G \in \mathbb R^{N_g}_{\ge 0}$ a fixed vector of protein concentrations. Consider the RBA problem extended to include metabolite and protein turnover, defined as follows,
\[
\mathcal P_p^{\mathrm{turn}}(\mu) \colon
\left\{
\begin{aligned}
    &\mathrm{ find }\,\, (Y,\nu) \in \mathbb R^{N_y}_{\ge 0} \times \mathbb R^{N_m}, \\[0.3em]
    &\mathrm{subject}\,\mathrm{to} \\[0.3em]
    &\Romannum{1}. \quad
    \Omega \nu
    + \mu \bigl( C_Y^S Y + C_B^S \bar B + C_G^S P_G \bigr)
    + \Gamma_Y^S Y + \Gamma_B^S \bar B + \Gamma_{P_G}^S P_G
    = 0, \\[0.6em]
    &\Romannum{2}. \quad
    \mu \bigl( C_Y^M Y + C_G^M P_G \bigr)
    + \Gamma_Y^M Y
    + \Gamma_{P_G}^M P_G
    \le K_T Y, \\[0.6em]
    &\Romannum{3}. \quad
    |\nu| \le K_E Y, \\[0.6em]
    &\Romannum{4}. \quad
    C_Y^D Y + C_G^D P_G \le \bar D .
\end{aligned}
\right.
\]
Then, for fixed $\mu$, the feasible set of $\mathcal P_p^{\mathrm{turn}}(\mu)$ is a convex polyhedron. In particular, the feasibility problem associated with $\mathcal P_p^{\mathrm{turn}}(\mu)$ can be formulated as a linear program.
\end{proposition}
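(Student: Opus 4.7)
The statement splits into two parts: the feasible set is a convex polyhedron, and the associated feasibility problem is a linear program. Both reduce to checking that, for fixed $\mu \ge 0$ and fixed data $(\bar B, P_G)$, each of the four constraints defining $\mathcal P_p^{\mathrm{turn}}(\mu)$ is affine in the decision variables $(Y,\nu)$. This is the same structural argument used for $\mathcal F(\mu)$ in~\eqref{feasible set}, but updated to absorb the turnover matrices into the existing linear maps.

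I would proceed constraint by constraint. For (I), every term apart from $\Omega\nu$ is a linear map applied either to $Y$ or to fixed data, so after grouping the equality reads $\Omega\nu + (\mu C_Y^S + \Gamma_Y^S)Y = -(\mu C_B^S + \Gamma_B^S)\bar B - (\mu C_G^S + \Gamma_{P_G}^S)P_G$, which is affine in $(Y,\nu)$. For (II), the $P_G$-terms move to the right-hand side and what remains, $(\mu C_Y^M + \Gamma_Y^M - K_T)Y \le -(\mu C_G^M + \Gamma_{P_G}^M)P_G$, is a linear inequality in $Y$ only. For (III), I would replace $|\nu| \le K_E Y$ by the two componentwise linear inequalities $\nu \le K_E Y$ and $-\nu \le K_E Y$, together with the nonnegativity $Y \ge 0$ already present in the ambient cone. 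Constraint (IV) is unchanged and already linear in $Y$.

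With these reformulations in hand, the feasible set is the intersection of finitely many hyperplanes and closed half-spaces inside $\mathbb R^{N_y}_{\ge 0} \times \mathbb R^{N_m}$, hence a closed convex polyhedron by definition. The feasibility problem is then the existence question for a point in this polyhedron, which is cast as a standard linear program in the usual way: for instance, by minimizing the zero objective, or a Phase-I slack variable, subject to the collected affine constraints.

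The only substantive point requiring care is constraint (III), whose absolute value prevents an immediate affine form; this is the sole potential source of nonlinearity and is dispatched by the componentwise split above. Crucially, the turnover matrices $\Gamma_Y^S, \Gamma_B^S, \Gamma_{P_G}^S, \Gamma_Y^M, \Gamma_{P_G}^M$ enter only as left multiplications on $Y$ or on fixed vectors, so no bilinear coupling between $Y$ and $\nu$, or between distinct components of $Y$, is generated by incorporating degradation. Therefore the argument follows exactly the pattern established prior to Proposition~\ref{prop:monotonicity}, with each coefficient matrix augmented by its turnover counterpart.
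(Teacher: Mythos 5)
Your proposal is correct and follows essentially the same route as the paper's proof: a constraint-by-constraint verification that (I), (II), (IV) are affine in $(Y,\nu)$ for fixed $\mu$, $\bar B$, $P_G$, with the absolute-value constraint (III) split into the two linear inequalities $-K_E Y \le \nu \le K_E Y$, yielding a polyhedral feasible set and an LP formulation. If anything, your wording is slightly more precise than the paper's (you correctly describe the set as an intersection of hyperplanes and closed half-spaces, and you spell out the Phase-I/zero-objective LP casting), but the argument is the same.
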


\begin{proof}
We show that all constraints defining the feasible set of
$\mathcal P_p^{\mathrm{turn}}(\mu)$ are linear equalities or inequalities in the decision variables $(Y,\nu)$. Constraint (\Romannum{1}) is an affine equality constraint. Indeed, $\Omega \nu$ is linear in $\nu$, while all remaining terms are linear functions of $Y$, since $\bar B$ and $P_G$ are fixed parameters and the turnover matrices $\Gamma_Y^S$, $\Gamma_B^S$, and $\Gamma_{P_G}^S$ are constant. Constraint (\Romannum{2}) is an affine inequality. The macromolecular synthesis demand consists of a growth-related term scaled by $\mu$ and additional turnover-induced terms, all of which are linear in $Y$. The right-hand side $K_T Y$ is linear since $K_T$ is diagonal. Constraint (\Romannum{3}) is equivalent to the pair of linear inequalities $-\,K_E Y \le \nu \le K_E Y$, and is therefore affine, and finally Constraint (\Romannum{4}) is linear in $Y$. Since the feasible set is defined by the intersection of finitely many affine subspaces, it is a convex polyhedron. Consequently, for fixed $\mu$, the feasibility problem associated with $\mathcal P_p^{\mathrm{turn}}(\mu)$ is a linear programming problem.
\end{proof}

\begin{proposition}[Existence of a maximal growth rate]\label{prop:mu_max}
There exists a finite maximal growth rate
\[
\mu_{\max} \in \mathbb R_{\ge 0}
\]
such that the RBA problem with macromolecular turnover
\(\mathcal P_p^{\mathrm{turn}}(\mu)\) is feasible for all
\(0 \le \mu \le \mu_{\max}\), and infeasible for all \(\mu > \mu_{\max}\).
\end{proposition}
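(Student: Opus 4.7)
My plan is to mirror the three–ingredient proof of Corollary~\ref{cor:mu_max}: establish (a) downward monotonicity of feasibility in $\mu$, (b) feasibility at $\mu=0$, and (c) an a priori finite upper bound on admissible growth rates. The convex–polyhedral structure of the feasible set delivered by Proposition~\ref{prop:convexity_turnover} will then allow me to promote the supremum into a maximum by a short compactness argument.

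For step~(a), I cannot reuse the scaling $\nu' = (\mu'/\mu)\nu$ of Proposition~\ref{prop:monotonicity}, because the turnover contribution $\Gamma_Y^S Y + \Gamma_B^S \bar B + \Gamma_{P_G}^S P_G$ in constraint~(I) carries no $\mu$ factor. Instead, given a feasible $(Y,\nu)$ at some $\mu>0$ and any $\mu'\in[0,\mu]$, I would set $Y'=Y$ and use the affine interpolation $\nu' = (\mu'/\mu)\nu + (1-\mu'/\mu)\tilde\nu$, where $\tilde\nu$ is any flux satisfying $\Omega\tilde\nu = -(\Gamma_Y^S Y + \Gamma_B^S \bar B + \Gamma_{P_G}^S P_G)$ together with $|\tilde\nu|\le K_E Y$. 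A direct substitution shows that constraint~(I) at $\mu'$ then holds by construction; constraint~(III) is preserved by the triangle inequality; constraint~(IV) is $\mu$-independent; and constraint~(II) is relaxed when $\mu$ decreases, its left-hand side being monotone increasing in $\mu$ since $C_Y^M$ and $C_G^M$ have nonnegative entries. The existence of such a $\tilde\nu$ is exactly feasibility of $\mathcal P_p^{\mathrm{turn}}(0)$ at the prescribed molecular composition $Y$, which I would introduce as a basal admissibility hypothesis paralleling the one invoked for Corollary~\ref{cor:mu_max} but now including compensation of turnover. Step~(b) is essentially the same hypothesis: at $\mu=0$ the system reduces to $\Omega\nu + \Gamma_Y^S Y + \Gamma_B^S \bar B + \Gamma_{P_G}^S P_G = 0$ and $\Gamma_Y^M Y + \Gamma_{P_G}^M P_G \le K_T Y$, supplemented by (III) and~(IV), and admits a nonnegative solution under the same assumption.

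For step~(c), constraint~(IV) gives $C_Y^D Y \le \bar D - C_G^D P_G$, and since each molecular machine is assigned to a unique compartment the admissible values of $Y$ lie in a bounded polyhedron. The left-hand side of~(II) equals $\mu(C_Y^M Y + C_G^M P_G) + \Gamma_Y^M Y + \Gamma_{P_G}^M P_G$ and grows linearly in $\mu$ through the fixed term $\mu C_G^M P_G \ge 0$, while the right-hand side $K_T Y$ remains uniformly bounded across feasible $Y$. Consequently~(II) must fail for all $\mu$ above a finite threshold, which proves $\mu_{\max}<+\infty$. Closing the interval at $\mu_{\max}$ is then routine: taking $\mu_n\uparrow\mu_{\max}$ with feasible $(Y_n,\nu_n)$, the uniform bounds on $Y_n$ from~(IV) and on $\nu_n$ from~(III) extract a convergent subsequence whose limit satisfies all constraints at $\mu_{\max}$, since the defining coefficients are affine in $\mu$.

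The main obstacle will be the construction of $\tilde\nu$ in step~(a): in full generality it requires the vector $-(\Gamma_Y^S Y + \Gamma_B^S \bar B + \Gamma_{P_G}^S P_G)$ to lie in the range of $\Omega$ and to admit a preimage compatible with the enzyme capacity bound $|\tilde\nu|\le K_E Y$. Rather than imposing an ad hoc structural condition on the stoichiometric matrix, the cleanest resolution is the biological hypothesis that the cell can maintain its composition against turnover at vanishing growth; this single assumption supplies step~(b) simultaneously and makes step~(a) go through, whereas in the turnover-free setting of Proposition~\ref{prop:monotonicity} it is trivially satisfied by $(Y,\nu)=(0,0)$ and so does not appear explicitly.
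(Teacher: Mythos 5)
Your proposal is correct in outline but follows a genuinely different, and in places more careful, route than the paper. The paper's own proof is shorter: it asserts feasibility of $\mathcal P_p^{\mathrm{turn}}(0)$ directly with $(Y,\nu)=(0,0)$, obtains the interval structure of the feasible-growth set by simply citing Proposition~\ref{prop:monotonicity}, derives boundedness of $\mu$ from constraint~(II) using the strict positivity of the diagonal of $K_T$ for any feasible solution with $Y\neq 0$, and then sets $\mu_{\max}:=\sup\mathcal M$, leaving open whether the interval is closed at $\mu_{\max}$. You instead (i) observe, correctly, that the scaling certificate $\nu'=(\mu'/\mu)\nu$ of Proposition~\ref{prop:monotonicity} does not transfer to the turnover problem because the terms $\Gamma_Y^S Y+\Gamma_B^S\bar B+\Gamma_{P_G}^S P_G$ carry no factor of $\mu$, and you repair this with the interpolation $\nu'=(\mu'/\mu)\nu+(1-\mu'/\mu)\tilde\nu$, where $\tilde\nu$ is a maintenance flux compensating turnover within the capacity bound; (ii) you use the same basal-admissibility hypothesis to get feasibility at $\mu=0$; and (iii) you add a compactness argument to show the supremum is attained. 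Points (i) and (iii) address real gaps that the paper glosses over: the cited monotonicity proof does not apply verbatim, the claimed feasibility of $(0,0)$ at $\mu=0$ actually forces the equality $\Gamma_B^S\bar B+\Gamma_{P_G}^S P_G=0$ in constraint~(I), and the paper's statement promises feasibility at $\mu_{\max}$ itself while its proof only yields $[0,\mu_{\max}]$ or $[0,\mu_{\max})$. The price you pay is an explicit extra hypothesis (existence of $\tilde\nu$ with $\Omega\tilde\nu=-(\Gamma_Y^S Y+\Gamma_B^S\bar B+\Gamma_{P_G}^S P_G)$ and $\abs{\tilde\nu}\le K_E Y$) that is absent from the statement, so strictly you prove a conditional version of Proposition~\ref{prop:mu_max}; arguably this is the assumption the paper uses implicitly. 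One further caveat: your unboundedness argument in step~(c) hinges on the term $\mu\,C_G^M P_G$ being nonzero, so it degenerates when $C_G^M P_G=0$, whereas the paper's argument leans on $C_Y^M Y$ for $Y\neq 0$; both require a mild non-degeneracy assumption, and a fully airtight proof would treat the degenerate cases (e.g.\ $Y=0$, $P_G=0$) separately.
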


\begin{proof}
For any fixed growth rate \(\mu \ge 0\), the feasible set of
\(\mathcal P_p^{\mathrm{turn}}(\mu)\) is defined by a finite system of linear equalities and inequalities in the decision variables \((Y,\nu)\). Hence, for fixed \(\mu\), feasibility of \(\mathcal P_p^{\mathrm{turn}}(\mu)\) is a linear programming feasibility problem. We first note that \(\mathcal P_p^{\mathrm{turn}}(0)\) is feasible. Indeed, setting \(Y = 0\) and \(\nu = 0\) satisfies all constraints, since all turnover-related terms are linear and nonnegative, and all density and capacity constraints are satisfied trivially. Next, define the feasible-growth set
\[
\mathcal M := \{ \mu \ge 0 \mid \mathcal P_p^{\mathrm{turn}}(\mu)
\text{ is feasible} \}.
\]
By the same monotonicity argument as in Proposition~\ref{prop:monotonicity}, which remains valid in the presence of turnover since the turnover terms enter linearly and preserve the affine dependence on $\mu$, the feasible-growth set $\mathcal M$ is an interval of the form $[0,\mu_{\max}]$ or $[0,\mu_{\max})$.

It remains to show that the feasible-growth set $\mathcal M$ is bounded from above and closed in $\mathbb R_{\ge 0}$. To do so, consider the macromolecular synthesis constraint 
\[
\mu \bigl( C_Y^M Y + C_G^M P_G \bigr)
+ \Gamma_Y^M Y
+ \Gamma_{P_G}^M P_G
\le K_T Y,
\]
for which all matrices involved have nonnegative coefficients, and the diagonal matrix \(K_T\) has strictly positive diagonal entries. For any feasible solution with \(Y \neq 0\), this inequality implies an upper bound on \(\mu\) that depends only on the efficiency coefficients contained in \(K_T\) and the macromolecular composition matrices. Therefore, feasibility cannot hold for arbitrarily large values of \(\mu\). Moreover, by the same closedness argument as in Remark~\ref{closed growth-rate interval}, for each fixed $\mu$, the feasible set of $\mathcal P_p^{\mathrm{turn}}(\mu)$ is a closed polyhedron, and all coefficients depend continuously on $\mu$. Therefore, the feasible-growth set $\mathcal M$ is closed in $\mathbb R_{\ge 0}$. Since $\mathcal M$ is nonempty and bounded from above, it follows that $\mu_{\max} := \sup \mathcal M < +\infty$. Hence, we deduce $\mathcal M = [0,\mu_{\max}].$
\end{proof}

\section{RBA for eukaryotic cells}

In this section, we summarizes the extension of the RBA
framework to eukaryotic cells.
The presentation follows the developments introduced in
\cite[Sections~3.1--3.6]{GoelzerEukaryotic750182}, but despite the summarization, it is intentionally kept at a high level.
Our objective is to expose the modeling principles, the structure of the constraints, and the resulting optimization problem, while referring the reader to the work~\cite{GoelzerEukaryotic750182} for detailed derivations and biological justifications.

Eukaryotic cells are substantially more complex than prokaryotic cells. The precise regulation of a large diversity of cellular processes remains only partially understood, even for well-established mechanisms such as transcription. From the perspective of RBA, the main challenge lies in systematically integrating the presence of organelles into the framework. This requires an explicit treatment of the localization of metabolism, macromolecular processes, molecular machines, and the associated constraints across multiple cellular compartments. In most genome-scale metabolic models, organelles are typically taken into account only through the localization of metabolites. In some recent reconstructions, such as the latest release of the yeast consensus model, reaction localization is added as an annotation in the reaction names. However, the biomass reaction usually represents the composition of the entire cell as a single entity, without distinguishing between the composition of individual organelles. As a consequence, the allocation of cellular resources to organelles is implicitly assumed to be fixed and independent of environmental conditions.

In contrast, the RBA formulation developed here removes this assumption. The cellular investment into organelles is explicitly included in the resource allocation problem, allowing organelle composition and size to adapt to growth conditions in a principled manner. Eukaryotic cells exhibit a much higher degree of structural and functional organization than prokaryotic cells. In particular, most cellular processes are spatially organized within membrane-bound organelles, such as mitochondria, nuclei, or endoplasmic reticulum. As a consequence, metabolism, macromolecular synthesis, and transport processes take place in distinct compartments, each characterized by its own physicochemical constraints. From the perspective of RBA, this raises a fundamental modeling challenge such as, for instance, how to integrate compartmentalization and organelle structure into a resource allocation framework, without losing the mathematical properties (linearity, convexity, tractability) that make RBA suitable for genome-scale applications. Classical genome-scale metabolic models typically account for compartments by labeling metabolites or reactions, while the biomass composition remains defined at the whole-cell level. As a result, the relative investment of cellular resources into different organelles is implicitly fixed. In contrast, the eukaryotic RBA framework explicitly treats organelle investment as a variable determined by resource allocation constraints.

\subsection{Modeling organelles}

Let us now discuss the general schema of the eukaryotic cell first steps by more details. To do so, we define a eukaryotic cell as a complex system whose cytoplasm contains multiple organelles. These organelles are represented through the introduction of $N_{\mathrm{com}}$ compartments, with $N_{\mathrm{com}} \ge 2$, since we always assume the presence of at least mitochondria, as illustrated in Figure~\ref{fig:eukaryotic_compartments}. The cell is partitioned into $N_{\mathrm{com}}$ compartments with volumes $V^z$, for $z \in \{1,\dotsc,N_{\mathrm{com}}\}$, and into $N^{\mathrm{int}}$ interfaces with surface areas $\mathcal A^z$, indexed by pairs $z \in \{0 \leftrightarrow i, \dotsc, j \leftrightarrow k\}$. In addition, the cell contains a set of metabolites $\mathbb S$ and a set of molecular machines $\mathbb Y$, which may be localized in different compartments (e.g.\@ the subsets $\mathbb S^k$ and $\mathbb Y^k$ associated with the $k$-th compartment).

In the illustration (see, Figure~\ref{fig:eukaryotic_compartments}), two organelles are represented. The first organelle, of volume $V^i$, is bounded by a single membrane with surface area $\mathcal A^{0 \leftrightarrow i}$. The second organelle is delimited by two membranes (for instance, an outer and an inner membrane) with surface areas $\mathcal A^{0 \leftrightarrow j}$ and $\mathcal A^{j \leftrightarrow k}$, respectively. These membranes define an intermembrane space of volume $V^j$ and a matrix of volume $V^k$. Consequently, the second organelle consists of two compartments, with volumes $V^j$ and $V^k$, associated with the interfaces $\mathcal A^{0 \leftrightarrow j}$ and $\mathcal A^{j \leftrightarrow k}$, respectively. In addition to compartments, it is necessary to introduce $N_{\mathrm{int}}$ interfaces describing the boundaries between compartments. At time $t$, the $i$-th compartment is characterized by a volume $V_i(t)$. Each compartment is connected to other compartments through interfaces, each interface being associated with a surface area denoted by $\mathcal A^{\ast\leftrightarrow i}$. Let $V_c(t)$ denote the total cell volume at time $t$. It is defined as
\begin{equation}
    V_c(t)
    =
    V_0(t)
    +
    \sum_{i=1}^{N_{\mathrm{com}}} V_i(t),
\end{equation}
where $V_0(t)$ corresponds to the volume of the cell not contained in any of the $N_{\mathrm{com}}$ compartments. This volume is classically referred to as the cytosol, and notice that the concentrations of molecular entities present in the cytosol are defined with respect to the cytosolic volume $V_0(t)$.

\begin{figure}[ht]
\centering
\begin{tikzpicture}[scale=0.9, every node/.style={font=\small}]

\tikzset{
    cell/.style={draw=black!55, thick},
    compartment/.style={draw=black!55, thick},
    interface/.style={draw=black!55, dashed, thick},
    arrow/.style={->, thick, black!55},
    Y0/.style={circle, fill=red!60, inner sep=2.5pt},
    Yi/.style={circle, fill=purple!60, inner sep=2.5pt},
    Yk/.style={circle, fill=red!60, inner sep=2.5pt},
    Yg/.style={circle, fill=teal!60, inner sep=2.5pt},
}

\draw[cell] (0,0) ellipse (8cm and 4.5cm);
\node at (3.8,2.8) {$V^c = \displaystyle\sum_{j=0}^{N_{\mathrm{com}}} V^j$};

\node at (5.5,0.5) {$V^0$};

\draw[compartment] (-3,1.2) ellipse (2.6cm and 1.6cm);
\node at (-4.5,1.7) {$V^i$};

\draw[interface,->] (-4.9,-0.5) -- (-4.6,-0.1);
\node at (-5.2,-0.8) {$\mathcal A^{0\leftrightarrow i}$};

\node at (-3.1,0.6) {$\mathbb S^i$};

\node at (-2.2,1.2) {$\mathbb Y^i$};

\foreach \x/\y in {-3.6/1.6, -3.2/1.8, -3.0/1.4} {
    \node[Yi] at (\x,\y) {};
}
\node[Yi] at (-3.9,1.3) {};
\node[Yg] at (-2.2,2.1) {};
\node[Yg] at (-2,1.9) {};
\node[Yg] at (-2.3,1.8) {};
\node at (-4.1,0.8) {$\mathbb Y^i_3$};
\node at (-2.7,1.9) {$\mathbb Y^i_1$};


\draw[compartment] (2.5,-1.6) ellipse (3.0cm and 1.6cm);
\node at (4.9,-0.99) {$V^j$};

\draw[compartment] (2.5,-1.6) ellipse (2.3cm and 1.1cm);
\node at (4.2,-1.9) {$V^k$};

\draw[interface,->] (0.2,-3.3) -- (0.5,-2.9);
\node at (0,-3.5) {$\mathcal A^{0\leftrightarrow j}$};

\draw[interface,->] (2.3,-3.9) -- (2.5,-2.9);
\node at (2.3,-3.9) {$\mathcal A^{j\leftrightarrow k}$};

\node at (1.2,-2.0) {$\mathbb S^k$};
\node at (-2,-1.2) {$\mathbb S^0$};

\node at (2.1,-1.3) {$\mathbb Y^k$};

\foreach \x/\y in {2.2/-1.9, 2.6/-2.1, 2.9/-1.8, 2.4/-1.4} {
    \node[Yk] at (\x,\y) {};
}
\node[Yg] at (3.2,-1.3) {};
\node at (3.6,-1.2) {$\mathbb Y^k_1$};
\node at (3.0,-2.3) {$\mathbb Y^k_2$};

\node at (0.4,0.1) {$\mathbb Y^0$};

\foreach \x/\y in {0.4/1.1, 0.7/0.7} {
    \node[Yg] at (\x,\y) {};
}
\foreach \x/\y in {1.2/0.8, 1.5/0.6, 1.7/1.0, 1.4/1.2} {
    \node[Y0] at (\x,\y) {};
}

\node at (0,1.3) {$\mathbb Y^0_1$};
\node at (1.8,1.4) {$\mathbb Y^0_2$};

\draw[<->, thick, black!55] (-2.1,1.05) -- (0.1,0.2);
\draw[<->, thick, black!55] (0.4,-0.1) -- (1.8,-1.3);

\draw[<->, thick, black!55] (-3.1,0.4) -- (-2.2,-1);
\draw[<->, thick, black!55] (-1.9,-1.4) -- (0.9,-2);

\end{tikzpicture}
\caption{The eukaryotic cell with interaction of two compartments.}
\label{fig:eukaryotic_compartments}
\end{figure}

To model the cell, we introduce a catalogue of molecular entities
(defined in detail below), and assume that each entity is associated
either with a specific compartment or with an interface between two
compartments. The abundance of each entity is assumed to be known in every compartment and interface in which it is localized. Let $n_{X^i}$ denote the number of entities of type $\mathbb X$ in the $i$-th compartment. The corresponding concentration in this compartment is defined as 
\[
[X^i]^i = \frac{n_{X^i}}{V^i},
\]
where $V^i$ denotes the volume of compartment $i$. We further define the concentration of $\mathbb X$ in compartment $i$ with respect to the total cell volume $V^c$ as
\[
[X^i]^{c} = \frac{n_{X^i}}{V^c}.
\]
The total concentration of entity $\mathbb X$ with respect to the cell volume is then given by
\[
[X]^c = \sum_{i=0}^{N_{\mathrm{com}}} \frac{n_{X^i}}{V^c},
\]
where $n_{X^0}$ corresponds to the number of entities $\mathbb X$ located in the cytosol. Finally, for an interface between compartments $i$ and $j$, the number of entities $\mathbb X$ associated with this interface is denoted by $n_{X^{i \leftrightarrow j}}$.

Concerning the compartments, interfaces, and geometric variables, we notice that the cell is modeled as a collection of $N_{\mathrm{com}}$ compartments, including the cytosol and organelles. Each compartment $i$ is characterized by a volume $V^i(t)$, and pairs of compartments are separated by interfaces (typically membranes) with surface areas $\mathcal A^{i\leftrightarrow j}(t)$. In order to formulate steady-state constraints compatible with exponential growth, all concentrations are expressed with respect to the total cell volume $V^c(t)$. This normalization allows compartment volumes and interface surfaces to vary with growth conditions while preserving linear relations between variables. Physical constraints on intracellular crowding and membrane occupancy are captured through density constraints. These constraints limit the amount of macromolecular material that can be contained in a compartment or associated with an interface. Under the assumption that these density constraints are saturated at steady state, compartment volumes and surface areas can be expressed as linear functions of macromolecular concentrations. This assumption plays a crucial role in preserving the linear structure of the RBA problem.

Metabolic reactions are localized within compartments. Each compartment has its own internal metabolic network, as well as exchange fluxes with neighboring compartments through interfaces. At steady state, metabolite balance equations are written independently for each compartment. Exchange fluxes appear with opposite signs in the mass balance equations of the two adjacent compartments, ensuring global mass conservation at the cell level. Despite the apparent increase in the number of constraints, all metabolite balances remain linear in fluxes and enzyme concentrations. Enzymes and macromolecular machines are assumed to operate locally in the compartments where they are localized. As in prokaryotic RBA, catalytic capacity constraints relate metabolic fluxes to enzyme concentrations via efficiency coefficients. Macromolecular processes, such as translation, protein import, or protein folding, are described by linear constraints linking process fluxes to the availability of the corresponding molecular machines. Targeting of proteins to specific compartments is explicitly accounted for through additional macromolecular activities consuming cellular resources.

Importantly, compartmentalization does not introduce nonlinear couplings between processes. All constraints remain affine in the decision variables. For each compartment, the RBA framework yields a set of constraints comprising, such as metabolite mass balances, enzyme capacity constraints, macromolecular synthesis constraints, and density constraints linking molecular content to compartment geometry. After normalization by the cell volume and elimination of auxiliary geometric variables, these compartment-level constraints can be assembled into a single cell-scale formulation. The resulting system closely mirrors the structure of the prokaryotic RBA problem, with additional linear constraints encoding compartmental organization. The key conceptual result is that compartmentalization does not alter the theoretical nature of the RBA problem. Under biologically reasonable assumptions, the eukaryotic RBA framework remains a linear, convex feasibility problem for a fixed growth rate. This ensures that genome-scale eukaryotic RBA models can, in principle, be solved efficiently using linear programming techniques.

Regarding the formulation of the eukaryotic RBA model, we first notice that a mitochondrion cannot be represented by a single compartment. Instead, at least two compartments must be introduced, such as the intermembrane space and the matrix. The matrix is associated with a single interface corresponding to the inner mitochondrial membrane. The intermembrane space has two interfaces: one with the matrix (the inner membrane) and one with the cytosol corresponding to the outer mitochondrial membrane.

\subsection{Formulation of the RBA problem for eukaryotic cells}

Introducing compartments and interfaces leads to a formulation of RBA that remains formally close to the prokaryotic case, provided that all quantities are normalized with respect to the total cell volume. We present in below the RBA feasibility problem for eukaryotic cells, by considering the following arguments.

\begin{itemize}
    \item We consider a set of $N_{\mathrm{com}}$ cellular compartments indexed by
    $\mathcal I_V = \{0,1,\dotsc,N_{\mathrm{com}}\}$,
    together with a set of $N^{\mathrm{int}}$ interfaces indexed by
    $\mathcal I_A = \{\,0 \leftrightarrow i,\; \dotsc,\; j \leftrightarrow k\,\}$.
    The index set $\mathcal I_A$ contains all interfaces separating pairs of compartments.
    We denote by
    \[
    N_c = N_{\mathrm{com}} + 1 + N^{\mathrm{int}}
    \]
    the total number of compartments and interfaces considered in the model.
    
    \item Let $Y \in \mathbb{R}^{N_y}_{\ge 0}$ denote the vector collecting the concentrations of all molecular machines across all cellular compartments and interfaces. Here, the quantities $N_m$ and $N_p$ refer to the numbers of distinct classes of metabolic enzymes and non-metabolic molecular machines introduced previously, independently of compartmental localization.
    In the eukaryotic setting, however, a given molecular machine may be associated with one or several compartment-specific copies. Consequently, $N_y$ denotes the total number of compartment-specific molecular machine instances, and therefore satisfies $N_y \ge N_m + N_p$,
    with equality when each molecular machine is assigned to a unique cellular compartment or interface. 

    \item $P_G \in \mathbb{R}^{N_g}_{\ge 0}$ denotes the vector of concentrations of the proteins in $\mathbb P_G$,
    defined over all compartments and interfaces,
    and $f_V$ denotes the vector of normalized compartment volumes with respect to the total cell volume,
    $f_V = \bigl( f_V^i \bigr)_{i \in \mathcal I_V}^T.$
    Similarly, $f_A$ denotes the vector of normalized interface surface areas with respect to the total
    cell volume,
    $f_A = \bigl( f_A^i \bigr)_{i \in \mathcal I_A}^{T}.$
    Finally, we define
    \[
    f = \begin{pmatrix} f_V \\ f_A \end{pmatrix}
    \]
    as the concatenation of the normalized volume and surface fractions.
\end{itemize}
Having established the modeling framework for compartmentalized cells, including metabolite balances, macromolecular synthesis, and density constraints, we now summarize these elements into a single feasibility problem. The objective is not to optimize directly, but to characterize whether a given growth rate can be supported by the available cellular resources. For a fixed growth rate $\mu \ge 0$, the eukaryotic RBA feasibility problem $\mathcal P^{e}_{\mathrm{rba}}(\mu)$ is defined as
\[
\mathcal P^{e}_{\mathrm{rba}}(\mu) \colon
\left\{ 
\begin{aligned}
    &\text{ find } Y\in \mathbb R^{N_y}, \nu \in \mathbb R^{N_m}, f \in \mathbb R^{N_c}  \\
    &\text{ subject to } \\ 
    &\Romannum{1}. \quad \Omega\nu + \mu \big(C_Y^S Y+ C_B^S B + C_G^S P_G + C_F^S f \hat B\big) = 0, \\
    &\Romannum{2}. \quad  \mu \big(C_Y^{M} Y + C_G^{M} P_G\big) \le K_{T} Y, \\
    &\Romannum{3}. \quad \abs{\nu} \le K_E Y, \\
    &\Romannum{4}. \quad C_Y^{D,iq} Y + C_G^{D,iq} P_G - C_F^{D,iq} f \le 0, \\
    &\Romannum{5}. \quad C_Y^{D,eq} Y + C_G^{D,eq} P_G - C_F^{D,eq} f = 0,  \\
    &\Romannum{6}. \quad C_F^{F} f - \bar C = 0,  \\
    &\Romannum{7}. \quad \underline f_{V} \le I_{V} f \le \bar f_{V},
\end{aligned} 
\right.
\]
where constraints (\Romannum{1}) and (\Romannum{2}) generalize the classical RBA metabolite balance
and enzyme capacity constraints to the case of compartmentalized cells. Constraints (\Romannum{4}) and (\Romannum{5}) encode density limitations associated with compartments and interfaces. Some of these constraints may be non-saturated (e.g.\ cytosolic crowding), while others are assumed to be always saturated (e.g.\ membrane occupancy). Constraint (\Romannum{6}) enforces the conservation of total cell volume and surface through normalization of compartment fractions. Finally, constraint (\Romannum{7}) imposes lower and upper bounds on the relative sizes of compartments and interfaces. The vectors $\underline f_{V}$ and $\bar f_{V}$ denote the minimal and maximal normalized volumes of compartments respectively, and the matrix $I_{V}$ is defined such that $f_V =  I_V f$.

For a fixed growth rate $\mu$, the feasible set of $\mathcal P^{e}_{\mathrm{rba}}(\mu)$ is again a convex polyhedron. Consequently, the eukaryotic RBA feasibility problem remains a linear feasibility problem, similarly to the prokaryotic case. The proof follows from the same arguments used in the prokaryotic framework, since all constraints remain affine or linear with respect to the decision variables  (see also~\cite{GoelzerEukaryotic750182} for further details on the eukaryotic RBA formulation). For brevity, we do not reproduce these arguments here.

\section{Dynamic resource allocation framework}

While classical RBA focuses on steady-state growth
and static feasibility problems, microbial cells operate in dynamically changing environments and must continuously adapt their allocation of biosynthetic resources. In this section, we therefore extend the RBA framework to a more precise dynamic setting and formulate an optimal control problem that captures the temporal regulation of resource allocation at the cellular level. We adopt the widely used hypothesis that microbial populations have evolved resource allocation strategies that maximize biomass production over time \cite{EdwardsIbarraPalsson2001, LewisEtAl2010}. Within this perspective, cellular growth is interpreted as the outcome of an optimization process driven by evolutionary pressure. Mathematically, this hypothesis leads naturally to an optimal control formulation, in which the objective is to maximize the final cell volume $V(T)$ over a fixed time horizon $[0,T]$. We consider the cellular dynamics given by~\eqref{enzyme and molecular dynamics}, which describe the temporal evolution of molecular machines under growth dilution and macromolecular turnover.

To formulate the dynamic resource allocation problem, we first introduce the state dynamics and control variables. More precisely, we rewrite~\eqref{enzyme and molecular dynamics} in a form suitable for optimal control analysis. For each molecular machine $\mathbb Y_j$, the concentration dynamics read
\begin{equation}\label{control dynamics}
    \frac{\diff Y_j(t)}{\diff t}
    =
    \lambda_{\mathbb Y_j}(t)
    - \bigl( \mu + \gamma_{\mathbb Y_j} \bigr) Y_j(t)
    =
    \begin{cases}
        \alpha(t)\,\nu_{\mathbb E_j}(t)
        - \bigl( \mu + \gamma_{\mathbb Y_j} \bigr) E_j(t), \\[0.4em]
        \bigl(1-\alpha(t)\bigr)\,\nu_{\mathbb M_j}(t)
        - \bigl( \mu + \gamma_{\mathbb Y_j} \bigr) M_j(t),
    \end{cases}
\end{equation}
where $\nu_{\mathbb E_j}(t)$ and $\nu_{\mathbb M_j}(t)$ denote the effective synthesis fluxes of enzymes and macromolecular machines, respectively, driven by ribosomal activity. The corresponding resource allocation mechanism is illustrated schematically in Fig.~\ref{fig:dynamic_allocation}. The scalar function $\alpha(t) \in [0,1]$ (hereafter referred to as the control) represents the fraction of translational capacity allocated to enzyme synthesis at time $t$, while the complementary fraction $1-\alpha(t)$ is allocated to the synthesis of other macromolecular machines. Moreover, the control function $\alpha(\cdot)$ is assumed to take values in a compact and convex admissible set $\mathcal U$, reflecting physical and biological limitations on resource allocation. Although the formulation involves a single scalar control for clarity, the approach readily extends to multiple allocation variables corresponding to distinct translation or synthesis processes. From a biological point of view, $\alpha(t)$ encodes the global strategy by which the cell distributes its limited biosynthetic resources. From a control-theoretic perspective, $\alpha$ is treated as the control input governing the dynamics of the system.  Although a single scalar control is considered here for simplicity, this formulation already captures the fundamental trade-offs between metabolic capacity and cellular infrastructure. Moreover, it naturally extends to more refined models involving multiple control variables associated with different protein classes.

\begin{figure}[t]
    \centering
    \includegraphics[width=\textwidth]{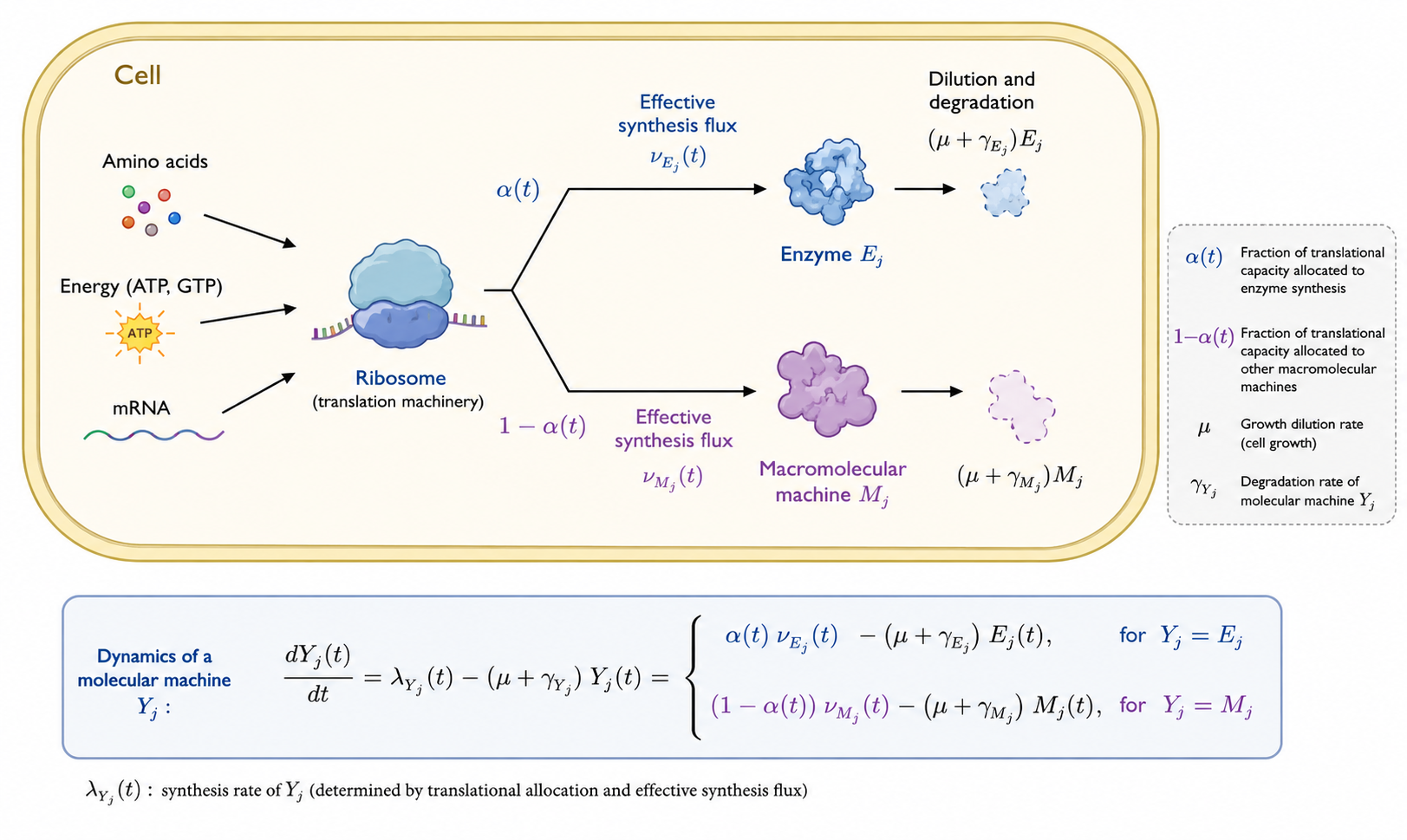}
    \caption{
    Schematic representation of the dynamic resource allocation mechanism.
    At each time $t$, a fraction $\alpha(t)$ of the translational capacity
    is allocated to enzyme synthesis, while the complementary fraction
    $1-\alpha(t)$ is allocated to the synthesis of other macromolecular
    machines. Both classes are subject to growth dilution and turnover.
    }
    \label{fig:dynamic_allocation}
\end{figure}

Having introduced the dynamic state variables and the control input, we can now formulate the corresponding optimal control problem. Notice first that the cell volume evolves according to $\dot V(t) = \mu(t)\, V(t)$. 
Since the initial volume $V(0)$ is fixed, maximizing the final volume $V(T)$ is equivalent to maximizing the accumulated growth rate over the interval $[0,T]$. We therefore introduce the cost functional
\[
\mathcal J(\alpha)
=
\int_{0}^{T} \mu\bigl(E(t), M(t)\bigr)\, \diff t,
\]
where the instantaneous growth rate $\mu$ depends on the current cellular state. In practice, $\mu(t)$ may be interpreted as the maximal growth rate compatible with the cellular resource constraints at time $t$, as determined by an underlying RBA feasibility problem. The dynamic resource allocation problem is formulated as an optimal control problem over a finite time horizon $[0,T]$. The state variables correspond to the concentrations of enzymes and metabolic machineries, whose dynamics are governed by synthesis and degradation processes, and the control variable represents the allocation of translational capacity among these cellular components. The objective is to maximize the cumulative growth over the considered time interval, quantified by the integral of the instantaneous growth rate. \begin{equation}\label{OCP}
\left\{
\begin{aligned}
    & \text{maximize} \quad
    \mathcal J(\alpha)
    = \int_{0}^{T} \mu\bigl(E(t), M(t)\bigr)\, \diff t, \\[0.3em]
    & \text{subject to} \quad
    \text{the dynamics~\eqref{control dynamics}}, \\[0.3em]
    & E_j(0) \ge 0, \quad M_j(0) \ge 0, \\[0.3em]
    & \alpha \in \mathcal U,
\end{aligned}
\right.
\end{equation}
where $\mathcal U$ denotes the set of admissible controls,
defined as the set of Lebesgue-measurable functions $\alpha \colon [0,T] \to [0,1]$, and notice that the instantaneous growth rate  $\mu\bigl(E(t), M(t)\bigr)$ should be understood as a smooth proxy for the growth potential induced by the current macromolecular composition of the cell. In particular, it may be interpreted as the maximal growth rate compatible with the underlying RBA constraints for fixed values of $E$ and $M$, without requiring an explicit solution of the associated feasibility problem at each time. The problem does not involve terminal constraints or explicit path constraints.  Nevertheless, path constraints could be incorporated to enforce additional physical or biological limitations, such as bounds on total protein content or ribosomal capacity. While such constraints increase the complexity of the formulation, they remain compatible with standard optimal control techniques. The imposed nonnegativity of the initial conditions ensures that all trajectories remain within the biologically meaningful region
\[
\Bigl\{ (E,M) \;\big|\;
E_j(t) \ge 0,\; M_j(t) \ge 0,\;
\text{and the dynamics~\eqref{control dynamics} are satisfied}
\Bigr\}.
\]
As a consequence, every admissible solution of the optimal control problem is consistent with the fundamental biological requirement that molecular concentrations remain nonnegative. Conceptually, this dynamic formulation is closely related to the optimal control framework proposed in~\cite{Giordano2016}, where the objective is to maximize the accumulation of a target quantity over a finite time horizon. The key difference lies in the structure of the dynamics, since the explicit incorporation of macromolecular turnover introduces additional dissipation terms, leading to a more realistic but also more intricate model. Importantly, the present formulation provides a natural bridge between dynamic optimization and classical steady-state RBA. In particular, steady-state solutions of the optimal control problem are expected to recover standard RBA predictions, while transient regimes reveal how cells dynamically adjust their allocation strategies to approach optimal growth. The dynamic optimal control problem is now fully specified. In the next section, we derive first-order necessary optimality conditions using Pontryagin's Maximum Principle.

\subsection{Pontryagin’s Maximum Principle for dynamic resource allocation}

In order to characterize optimal cellular resource allocation strategies, we now analyze the optimal control problem introduced in the previous section using PMP.
This framework provides first-order necessary conditions
for optimality and allows us to derive structural properties
of optimal allocation strategies. More precisely, applying PMP yields necessary conditions for optimality in the form of an adjoint system coupled to the state dynamics. Owing to the affine dependence of the Hamiltonian on the control variable, optimal controls are characterized by a bang–bang structure, determined by the sign of an associated switching function. This structure admits a natural biological interpretation, whereby the cell alternates between distinct resource allocation regimes depending on the relative marginal benefits of enzyme and machinery production.

We recall that the state variables of the system are given by
the concentrations of enzymes and macromolecular machines,
denoted collectively by
\[
x(t) := \bigl(E(t), M(t)\bigr) \in \mathbb R^{N_m + N_p}_{\ge 0},
\]
whose dynamics are governed by
\begin{equation*}\label{eq:state_dynamics_PMP}
\dot x(t) = h\bigl(x(t), \alpha(t)\bigr),
\end{equation*}
where $\alpha(t) \in [0,1]$ represents the fraction of translational capacity allocated to enzyme synthesis. Throughout this section, we denote by $x(\cdot)$ the dynamic state of the system. This notation is deliberately distinguished from that of Section~\ref{Pre-Result-on-RBA}, where the static RBA framework is formulated in terms of the variable $Y=(E,M)$. The objective functional is
\begin{equation*}\label{eq:cost_functional_PMP}
\mathcal J(\alpha)
=
\int_0^T \mu\bigl(x(t)\bigr)\,\diff t,
\end{equation*}
where $\mu(x)$ denotes the instantaneous growth rate,
assumed to be a sufficiently smooth function of the cellular state.
The initial condition $x(0)=x_0$ is fixed, and there is no terminal constraint on $x(T)$. The admissible controls belong to the set
\[
\mathcal U
=
\bigl\{ \alpha \in L^\infty(0,T) \mid 0 \le \alpha(t) \le 1 \ \text{a.e.} \bigr\}.
\]
To apply PMP, we introduce the adjoint (costate) variables
\[
\eta(t) := (\eta_{\mathbb E(t)}, \eta_{\mathbb M(t)}) \in \mathbb R^{N_m + N_p},
\]
and define the Hamiltonian function
\begin{equation*}\label{eq:Hamiltonian}
\mathcal H(x, \eta, \eta^0, \alpha)
=
\eta^0\mu(x)
+
\eta \cdot h(x,\alpha).
\end{equation*}
Using the explicit form of the dynamics~\eqref{control dynamics},
the Hamiltonian can be written as
\begin{align*}
\mathcal H\bigl(x, \eta, \eta^0, \alpha\bigr)
&=
\eta^0\mu(E,M) + \sum_{j} \eta_{\mathbb E_j} \Bigl(\alpha \,\nu_{\mathbb E_j} -
(\mu+\gamma_{\mathbb Y_j}) E_j \Bigr) \nonumber\\
&\quad + \sum_{j} \eta_{\mathbb M_j} \Bigl( (1-\alpha)\,\nu_{\mathbb M_j} - (\mu+\gamma_{\mathbb Y_j}) M_j \Bigr).
\end{align*}
This expression highlights the linear dependence of the Hamiltonian
on the control variable $\alpha$, a key feature that will have important consequences for the structure of optimal controls.

Existence of an optimal solution for the class of control problems considered here follows from standard results in optimal control theory. Since no terminal state constraints are imposed, there are no controllability issues associated with reaching a prescribed final state. Moreover, the system dynamics are affine with respect to the control variable, which takes values in a compact and convex set, namely the closed interval $[0,1]$. In addition, one readily verifies that all trajectories remain bounded over any finite time horizon. Under these conditions, existence of an optimal control is ensured by Filippov’s theorem (see, e.g.\@~\cite{AgrachevSachkov2013}). For the optimal control problem~\eqref{OCP} with state variable $x \in \mathbb R^n$, PMP further guarantees the existence of a scalar $\eta^0 \le 0$ and a piecewise absolutely continuous adjoint mapping $\eta \colon [0,T] \to \mathbb R^n$, with $(\eta(\cdot), \eta^0) \neq (0,0)$, such that the extremal quadruple $(x, \eta, \eta^0, \alpha^\ast)$ satisfies the generalized Hamiltonian system
\[
\left\{
\begin{aligned}\label{OCP-hamilton}
    & \dot x(t) = \frac{\partial}{\partial \eta} \mathcal H(x , \eta , \eta^0, \alpha^\ast) \\
    & \dot \eta(t) = -\frac{\partial}{\partial x} \mathcal H(x , \eta , \eta^0, \alpha^\ast) \\ 
    & \mathcal H(x , \eta , \eta^0, \alpha^\ast) = \max_{\alpha \in [0, 1]}\mathcal H(x , \eta , \eta^0, \alpha),
\end{aligned}
\right.
\]
In other words, PMP states that if $\alpha^\ast$ is an optimal control with associated trajectory $x$, then the state evolves according to \[
\dot x(t) = h\bigl(x(t), \alpha^\ast(t)\bigr),
\qquad x(0)=x_0,
\]
and the costate variables satisfy
\begin{equation*}\label{eq:adjoint_equation}
\dot \eta(t)
=
-
\frac{\partial}{\partial x} \mathcal H
\bigl(x(t), \eta(t), \alpha^\ast(t)\bigr),
\end{equation*}
with terminal condition
\[
\eta(T)=0,
\]
since the final state is free and there is no terminal cost.
Notice that here, we use the fact that growth dilution affects all molecular species proportionally to the instantaneous growth rate $\mu (x)$, and explicitly, for each enzyme $E_j$ and macromolecular machine $M_j$, the adjoint equations are given by
\begin{align*}
\dot \eta_{\mathbb E_j}(t)
&=
- \eta^0
\frac{\partial \mu}{\partial E_j}
+
\eta_{\mathbb E_j}(\mu+\gamma_{\mathbb Y_j})
+
\eta_{\mathbb E_j} E_j \frac{\partial \mu}{\partial E_j}
+
\eta_{\mathbb M_j} M_j\frac{\partial \mu}{\partial E_j},
\\
\dot\eta_{\mathbb M_j}(t)
&=
-  \eta^0
\frac{\partial \mu}{\partial M_j}
+
\eta_{\mathbb M_j}(\mu+\gamma_{\mathbb Y_j})
+
\eta_{\mathbb M_j} M_j\frac{\partial \mu}{\partial M_j}
+
\eta_{\mathbb E_j} E_j \frac{\partial \mu}{\partial M_j}.
\end{align*}
These equations reflect the marginal value of enzymes
and macromolecular machines with respect to future growth.
Since the Hamiltonian is affine with respect to $\alpha$,
it can be written as $ \mathcal H(x,\eta, \eta^0,\alpha) = \mathcal H_0 + \alpha \mathcal H_1$. More precisely, 
\[
\begin{aligned} \mathcal H(x,\eta, \eta^0,\alpha) = 
    & \underbrace{\eta^0\mu(E,M) - \sum_{j} \eta_{\mathbb E_j}(\mu+\gamma_{\mathbb Y_j}) E_j - \sum_{j} \eta_{\mathbb M_j}\bigl((\mu+\gamma_{\mathbb Y_j}) M_j - \nu_{\mathbb M_j}\bigr)}_{\mathcal H_0} \\
    & + \alpha \underbrace{\sum_j
    \Bigl(
    \eta_{\mathbb E_j}\nu_{\mathbb E_j}
    -
    \eta_{\mathbb M_j}\nu_{\mathbb M_j}
    \Bigr)}_{\mathcal H_1}.
\end{aligned}
\]

The maximum condition states that for almost every $t \in [0,T]$,
the optimal control $\alpha^\ast(t)$ maximizes the Hamiltonian, e.g.\@
$
\alpha^\ast(t)
\in
\arg\max_{\alpha \in [0,1]}
\mathcal H\bigl(x(t), \eta(t), \eta^0,  \alpha\bigr),
$
which yields 
\[
\alpha^\ast(t)
=
\begin{cases}
1, & \mathcal H_1(x(t),\eta(t)) > 0, \\
0, & \mathcal H_1(x(t),\eta(t)) < 0, \\
\alpha_{sing}(t), & \mathcal H_1(x(t),\eta(t)) = 0.
\end{cases}
\]
Thus, optimal controls are generically of \emph{bang--bang} type, with possible switching times determined by the zeros of $\mathcal H_1$. Intervals on which $\mathcal H_1 \equiv 0$ correspond to \emph{singular arcs}, where additional optimality conditions must be imposed.

Within the present linear–affine modeling framework, the bang--bang structure suggests that growth-optimal resource allocation strategies may involve alternating phases in which translational resources are predominantly allocated either to metabolic enzymes or to other macromolecular machinery. This interpretation, however, should be understood within the scope of the modeling assumptions. In more detailed biological models, nonlinear effects such as enzyme saturation, cooperative interactions, or diminishing returns may alter the Hamiltonian structure and lead to smoother allocation profiles. In the present model, the switching function $\mathcal H_1$ compares the marginal growth value of producing enzymes versus structural or regulatory proteins. Positive values of $\mathcal H_1$ indicate that increasing enzymatic capacity enhances future growth more efficiently, whereas negative values favor investment in cellular infrastructure.

Protein turnover plays a critical role in this balance.
Higher degradation rates increase the effective cost
of maintaining macromolecular machines, thereby shifting the switching surface and modifying the optimal allocation strategy. Within the present linear--affine optimal control framework, the PMP characterization suggests that optimal controls may involve sharp transitions between enzyme-dominated and machine-dominated allocation regimes. In affine optimal control systems, such switching structures may in some cases lead to high-frequency switching phenomena,
commonly referred to as \emph{chattering} (see, e.g.\@ \cite{BoarottoSigalotti2019, ZelikinBorisov1994}). However, in the present RBA setting, this possibility should be interpreted cautiously. Such behavior may partly reflect the simplified linear structure of the model and the absence of regulatory or energetic penalties on rapid reallocations.
In more realistic biological settings, nonlinear regulation,
delayed cellular responses, or metabolic smoothing effects
would likely regularize the control profiles and produce smoother allocation dynamics. 

In the limit of infinitely fast switching, the time-averaged dynamics converge to a steady allocation that satisfies classical RBA constraints. This provides a rigorous interpretation of RBA as the steady-state envelope of an underlying dynamic optimal control problem. Overall, Pontryagin's Maximum Principle provides a powerful framework to analyze optimal cellular resource allocation, bridging dynamic regulation, macromolecular turnover, and steady-state growth optimization. In particular, steady-state solutions of the optimal control problem correspond to allocations that maximize growth under static resource constraints, thereby recovering the classical RBA formulation. From this viewpoint, RBA emerges as the stationary limit of a dynamic optimization process, while the optimal control framework provides a natural explanation for how such allocations may be reached over time.

To illustrate the structure of the optimal control problem and the consequences of PMP, we consider a minimal toy model capturing the essential trade-off between metabolic capacity and cellular machinery. We consider a simplified cell characterized by two state variables: a pool of metabolic enzymes $E(t)$ and a pool of macromolecular machinery $M(t)$. Cell growth is assumed to be limited by the least abundant resource, leading to the growth law \begin{equation*}\label{eq:toy_growth}
\mu(E,M) = \min\{\kappa_{\mathbb E} E,\; \kappa_{\mathbb M} M\},
\end{equation*}
where $\kappa_{\mathbb E},\kappa_{\mathbb M} > 0$ are efficiency constants. The protein synthesis dynamics are governed by ribosome-driven fluxes $\nu_{\mathbb E}(t)$ and $\nu_{\mathbb M}(t)$, which are dynamically allocated through the control variable $\alpha(t)\in[0,1]$. The system dynamics are given by
\begin{align*}
\dot E(t) &= \alpha(t)\,\nu_{\mathbb E}(t)
- \bigl(\mu(E,M) + \gamma_{\mathbb E}\bigr)\,E(t),  \label{eq:toy_E}\\
\dot M(t) &= \bigl(1-\alpha(t)\bigr)\,\nu_{\mathbb M}(t)
- \bigl(\mu(E,M) + \gamma_{\mathbb M}\bigr)\,M(t). 
\end{align*}

Here, the fluxes $\nu_{\mathbb E}(t)$ and $\nu_{\mathbb M}(t)$
represent the effective ribosomal synthesis capacities
for enzymes and macromolecular machines, respectively,
and may depend implicitly on the current cellular state.
The Hamiltonian associated with this problem is affine in the control $\alpha$, and PMP predicts a bang--bang structure for optimal allocation strategies. In particular, when enzymatic capacity is growth-limiting (i.e.\ $\kappa_{\mathbb E} E < \kappa_{\mathbb M} M$), the optimal strategy consists in allocating all translational resources to enzyme synthesis ($\alpha^\ast=1$). Conversely, when macromolecular machinery is limiting, resources are optimally redirected toward machinery production ($\alpha^\ast=0$). As the system evolves, the state typically oscillates around the balanced-growth manifold
\[
\kappa_{\mathbb E} E = \kappa_{\mathbb M} M,
\]
which coincides with the steady-state resource balance condition
of classical RBA. In the long-time limit, rapid switching between enzyme- and machinery-dominated regimes may occur, yielding an effective averaged allocation that recovers the steady-state RBA solution. Despite its simplicity, this toy model reproduces key qualitative features of the full RBA framework, such as bang--bang allocation strategies, possible chattering behavior near balanced growth, and convergence toward steady-state resource allocation. It therefore provides an intuitive illustration of how classical RBA can be interpreted as the steady-state envelope of an underlying dynamic optimal control problem.

\section{Conclusion and perspectives}

The RBA has emerged in recent years as a powerful constraint-based framework for studying the allocation of intracellular resources under growth-optimal conditions. By explicitly accounting for enzymatic capacities, macromolecular synthesis costs, and global cellular constraints, RBA provides a mechanistic description of how cellular systems distribute their limited biosynthetic resources in order to sustain growth. However, most existing formulations have been primarily developed for prokaryotic organisms under steady-state assumptions, thereby neglecting several biological mechanisms that become essential when considering eukaryotic cells or dynamically changing environments.

The main objective of the present work was therefore to extend the classical RBA framework toward a more general mathematical formulation capable of incorporating both intracellular turnover mechanisms and temporal adaptation processes. To achieve this, we first revisited the standard prokaryotic RBA formulation and introduced a refined description of molecular machine dynamics by explicitly accounting for degradation and turnover effects. From a biological perspective, this extension allows the model to capture the continuous renewal of intracellular components, which plays a central role in protein homeostasis, stress adaptation, and long-term cellular viability. From a mathematical perspective, the introduction of turnover terms modifies the structure of the synthesis constraints while preserving the convexity properties that make RBA computationally tractable.

Building upon this first extension, we then proposed a generalized formulation adapted to eukaryotic cells. Unlike prokaryotic organisms, eukaryotic cells exhibit a highly compartmentalized intracellular organization, where molecular species must be synthesized, transported, and maintained across multiple organelles subject to distinct physical constraints. To account for this complexity, we introduced compartment-specific density constraints together with transport processes linking intracellular compartments. This formulation provides, to the best of our knowledge, one of the first mathematical extensions of RBA explicitly designed to capture the structural organization of eukaryotic cells within a rigorous optimization framework.

A central part of this work was devoted to the mathematical analysis of the resulting RBA feasibility problems. In particular, we established fundamental structural properties of the proposed formulation, including convexity of the feasible set, monotonicity of feasibility with respect to the growth rate, and existence of feasible solutions under suitable assumptions. These properties are of primary importance, not only from a theoretical viewpoint, but also for practical numerical applications, since they guarantee that the associated optimization problems remain compatible with efficient linear programming techniques, even for large-scale cellular models.

In order to move beyond the classical steady-state paradigm, we subsequently introduced a dynamic resource allocation model formulated as an optimal control problem. In this dynamic setting, the allocation of translational resources between metabolic enzymes and other macromolecular machines is represented by a time-dependent control variable. This formulation allows the framework to capture transient adaptation strategies, where cells continuously redistribute their biosynthetic resources in response to internal constraints and growth requirements. By coupling these allocation decisions with macromolecular turnover and growth dilution, the proposed model establishes a natural bridge between intracellular physiology and dynamic optimization. To characterize optimal allocation strategies, we derived first-order necessary conditions using Pontryagin's Maximum Principle. The resulting adjoint system provides a quantitative interpretation of the marginal value associated with intracellular resources, thereby offering new insights into how cellular components contribute to long-term growth performance. Beyond its theoretical interest, this optimal control formulation opens the possibility of studying transient metabolic adaptations, switching behaviors, or time-dependent trade-offs between immediate growth and long-term cellular robustness.

Despite these advances, the present framework remains intentionally simplified in several respects. In particular, the dynamic formulation considered here relies on a single scalar allocation variable, which provides a minimal representation of translational resource allocation. Real cellular systems,  however, involve multiple interacting regulatory layers, including transcriptional regulation, post-translational modifications, signaling pathways, and environmental sensing mechanisms, which are not explicitly modeled here. In addition, the instantaneous growth rate is represented through a smooth proxy rather than by solving the underlying RBA feasibility problem at each time step. While this approximation facilitates the mathematical analysis, it may overlook certain nonlinear effects arising in genome-scale models. Furthermore, stochastic fluctuations, cell-to-cell variability, and explicit extracellular nutrient dynamics have not been considered in the present study.

Moreover, in this modeling framework, the metabolic, macromolecular and density constraints are represented by linear equalities and inequalities. This structural assumption plays a central role in many of the theoretical properties established in this manuscript, including convexity of the feasible sets, monotonicity with respect to the growth rate, and the tractability of the associated optimization problems. Likewise, the bang--bang structure of the optimal controls derived from PMP is closely linked to the affine dependence of the dynamics on the control variables. From a biological perspective, however, these linear relationships should be regarded as idealized approximations. Real cellular systems may exhibit nonlinear effects, including enzyme saturation, allosteric regulation, cooperative interactions, signaling feedback, and environment-dependent regulatory mechanisms, which are not explicitly captured in the present formulation. Consequently, the conclusions established here should be interpreted within the scope of the RBA assumptions rather than as universal properties of biological resource allocation. Nevertheless, the proposed framework provides a mathematically tractable foundation for analyzing dynamic resource allocation at genome scale, and offers a rigorous starting point for future extensions incorporating nonlinear regulatory mechanisms, stochastic effects, or experimentally calibrated kinetic constraints.

These limitations naturally suggest several promising directions for future research. A first extension would consist in introducing multiple control variables associated with distinct biosynthetic processes, such as transcription, translation, protein folding, or intracellular transport. Another important direction would be to couple the intracellular dynamics with extracellular nutrient availability, thereby allowing the study of cellular adaptation in fluctuating environments. From a computational perspective, the integration of the proposed framework with genome-scale metabolic reconstructions and efficient numerical optimal control methods constitutes a particularly promising avenue. More generally, the combination of resource allocation models, compartmentalized cellular physiology, and dynamic optimization may provide a powerful mathematical framework for studying cellular adaptation, metabolic engineering, and synthetic biology applications.

Overall, the present work provides a rigorous mathematical bridge between steady-state resource allocation models and dynamic optimal control approaches. By combining constraint-based modeling, compartmental cellular organization, macromolecular turnover, and dynamic optimization, it contributes to a deeper quantitative understanding of how living cells allocate their limited resources to sustain growth, maintain homeostasis, and adapt to changing environments. We hope that this framework will serve as a foundation for future theoretical, computational, and experimental investigations at the interface between mathematical biology, systems biology, and optimal control.

\addtocontents{toc}{\protect\setcounter{tocdepth}{-1}}
\section*{Acknowledgments}
\addtocontents{toc}{\protect\setcounter{tocdepth}{2}}

The author would like to acknowledge Vincent Fromion for valuable discussions on the RBA framework.

\bibliographystyle{abbrv}
\bibliography{bib}

@book{Schaechter2006,
  author    = {Schaechter, Moselio and Ingraham, John L. and Neidhardt, Frederick C.},
  title     = {Microbe},
  publisher = {ASM Press},
  address   = {Washington, DC},
  year      = {2006}
}

@article{Neidhardt1999,
  author  = {Neidhardt, Frederick C.},
  title   = {Bacterial growth: Constant obsession with {$\mathrm{d}N/\mathrm{d}t$}},
  journal = {Journal of Bacteriology},
  volume  = {181},
  year    = {1999},
  pages   = {7405--7408}
}

@book{Brul2007,
  author    = {Brul, Stanley and Van Gerwen, Sander and Zwietering, Marcel},
  title     = {Modelling Microorganisms in Food},
  publisher = {Elsevier},
  address   = {New York},
  year      = {2007}
}

@article{Brauner2016,
  author  = {Brauner, Alex and Fridman, Orit and Gefen, Ofer and Balaban, Nathalie Q.},
  title   = {Distinguishing between resistance, tolerance and persistence to antibiotic treatment},
  journal = {Nature Reviews Microbiology},
  volume  = {14},
  year    = {2016},
  pages   = {320--330}
}

@article{Liao2016,
  author  = {Liao, James C. and Mi, Li and Pontrelli, Stefano and Luo, Shouqiang},
  title   = {Fuelling the future: Microbial engineering for the production of sustainable biofuels},
  journal = {Nature Reviews Microbiology},
  volume  = {14},
  year    = {2016},
  pages   = {288--304}
}

@article{Scott2010,
  author  = {Scott, Matthew and Gunderson, Carl W. and Mateescu, Edward M. and Zhang, Zhongge and Hwa, Terence},
  title   = {Interdependence of cell growth and gene expression: Origins and consequences},
  journal = {Science},
  volume  = {330},
  year    = {2010},
  pages   = {1099--1102}
}

@article{VanDenBerg2002,
  author  = {van den Berg, Henk and Kiselev, Yuri and Orlov, Mikhail},
  title   = {Optimal allocation of building blocks between nutrient uptake systems in a microbe},
  journal = {Journal of Mathematical Biology},
  volume  = {44},
  year    = {2002},
  pages   = {276--296}
}

@article{Molenaar2009,
  author  = {Molenaar, Douwe and van Berlo, Rogier and de Ridder, Dick and Teusink, Bas},
  title   = {Shifts in growth strategies reflect tradeoffs in cellular economics},
  journal = {Molecular Systems Biology},
  volume  = {5},
  year    = {2009},
  pages   = {323}
}

@article{Weisse2015,
  author  = {Wei{\ss}e, Andreas Y. and Oyarz{\'u}n, Diego A. and Danos, Vincent and Swain, Peter S.},
  title   = {Mechanistic links between cellular tradeoffs, gene expression, and growth},
  journal = {Proceedings of the National Academy of Sciences of the United States of America},
  volume  = {112},
  year    = {2015},
  pages   = {E1038--E1047}
}

@article{Giordano2016,
  author  = {Giordano, Nicola and Mairet, Francis and Gouz{\'e}, Jean-Louis and Geiselmann, Johannes and De Jong, Hidde},
  title   = {Dynamical allocation of cellular resources as an optimal control problem: Novel insights into microbial growth strategies},
  journal = {PLoS Computational Biology},
  volume  = {12},
  year    = {2016},
  pages   = {e1004802}
}

@article{Erickson2017,
  author  = {Erickson, David W. and Schink, Steven J. and Patsalo, Viktoriya and Williamson, James R. and Gerland, Ulrich and Hwa, Terence},
  title   = {A global resource allocation strategy governs growth transition kinetics of \emph{Escherichia coli}},
  journal = {Nature},
  volume  = {551},
  year    = {2017},
  pages   = {119--123}
}

@article{Dourado2020,
  author  = {Dourado, Hugo and Lercher, Martin J.},
  title   = {An analytical theory of balanced cellular growth},
  journal = {Nature Communications},
  volume  = {11},
  year    = {2020},
  pages   = {1--14}
}

@article{Scott2014,
  author  = {Scott, Matthew and Klumpp, Stefan and Mateescu, Edward and Hwa, Terence},
  title   = {Emergence of robust growth laws from optimal regulation of ribosome synthesis},
  journal = {Molecular Systems Biology},
  volume  = {10},
  year    = {2014},
  pages   = {747}
}

@article{Mairet2021,
  author  = {Mairet, Francis and Gouz{\'e}, Jean-Louis and de Jong, Hidde},
  title   = {Optimal proteome allocation and the temperature dependence of microbial growth laws},
  journal = {NPJ Systems Biology and Applications},
  volume  = {7},
  year    = {2021},
  pages   = {1--11}
}

@article{Basan2015,
  author  = {Basan, Mert and Hui, Sui and Okano, Hiroyuki and Zhang, Zhi and Shen, Yang and Williamson, James and Hwa, Terence},
  title   = {Overflow metabolism in {Escherichia coli} results from efficient proteome allocation},
  journal = {Nature},
  volume  = {528},
  year    = {2015},
  pages   = {99--104}
}

@article{Maitra2015,
  author  = {Maitra, A. and Dill, Ken A.},
  title   = {Bacterial growth laws reflect the evolutionary importance of energy efficiency},
  journal = {Proceedings of the National Academy of Sciences of the United States of America},
  volume  = {112},
  year    = {2015},
  pages   = {406--411}
}

@book{Carlson1991,
  author    = {Carlson, D. A. and Haurie, A. B. and Leizarowitz, A.},
  title     = {Infinite Horizon Optimal Control},
  publisher = {Springer},
  address   = {Berlin},
  year      = {1991}
}

@article{Yegorov2018,
  author  = {Yegorov, Ivan and Mairet, Fabrice and Gouz{\'e}, Jean--Louis},
  title   = {Optimal feedback strategies for bacterial growth with degradation, recycling, and effect of temperature},
  journal = {Optimal Control Applications and Methods},
  volume  = {39},
  year    = {2018},
  pages   = {1084--1109}
}

@article{Izard2015,
  author  = {Izard, J. and Balderas, C. G. and Ropers, D. and Lacour, S. and Song, X. and Yang, Y. and Lindner, A. and Geiselmann, J. and de Jong, H.},
  title   = {A synthetic growth switch based on controlled expression of {RNA} polymerase},
  journal = {Molecular Systems Biology},
  volume  = {11},
  year    = {2015},
  pages   = {840}
}

@article{Yegorov2019,
  author  = {Yegorov, Ivan and Mairet, Fabrice and de Jong, Hidde and Gouz{\'e}, Jean--Louis},
  title   = {Optimal control of bacterial growth for the maximization of metabolite production},
  journal = {Journal of Mathematical Biology},
  volume  = {78},
  year    = {2019},
  pages   = {985--1032}
}

@inproceedings{YaboCDC2019,
  author    = {Yabo, A. G. and Caillau, Jean--Baptiste and Gouz{\'e}, Jean--Louis},
  title     = {Singular regimes for the maximization of metabolite production},
  booktitle = {Proceedings of the {IEEE} 58th Conference on Decision and Control (CDC)},
  publisher = {IEEE},
  year      = {2019},
  pages     = {31--36}
}

@article{YaboIFAC2020,
  author  = {Yabo, A. G. and Gouz{\'e}, Jean--Louis},
  title   = {Optimizing bacterial resource allocation: Metabolite production in continuous bioreactors},
  journal = {IFAC-PapersOnLine},
  volume  = {53},
  year    = {2020},
  pages   = {16753--16758}
}

@article{YaboMBE2020,
  author  = {Yabo, A. G. and Caillau, Jean--Baptiste and Gouz{\'e}, Jean--Louis},
  title   = {Optimal bacterial resource allocation: Metabolite production in continuous bioreactors},
  journal = {Mathematical Biosciences and Engineering},
  volume  = {17},
  year    = {2020},
  pages   = {7074--7100}
}

@article{YaboSIAM2022,
author = {Yabo, Agust\'{\i}n G. and Caillau, Jean-Baptiste and Gouz\'{e}, Jean-Luc and de Jong, Hidde and Mairet, Francis},
title = {Dynamical Analysis and Optimization of a Generalized Resource Allocation Model of Microbial Growth},
journal = {SIAM Journal on Applied Dynamical Systems},
volume = {21},
number = {1},
pages = {137-165},
year = {2022},
doi = {10.1137/21M141097X},

URL = { 
    
        https://doi.org/10.1137/21M141097X
    
    

},
eprint = { 
    
        https://doi.org/10.1137/21M141097X
    
    

}
,
    abstract = { Gaining a better comprehension of the growth of microorganisms is a major scientific challenge, which has often been approached from a resource allocation perspective. Simple mathematical self-replicator models based on resource allocation principles have been surprisingly effective in accounting for experimental observations of the growth of microorganisms. Previous work, using a three-variable resource allocation model, predicted an optimal resource allocation scheme for the adaptation of microbial cells to a sudden nutrient change in the environment. We here propose an extended version of this model considering also proteins responsible for basic housekeeping functions, and we study their impact on predicted optimal strategies for resource allocation following changes in the environment. A full dynamical analysis of the system shows there is a single globally attractive equilibrium, which can be related to steady-state growth conditions of bacteria observed in experiments. We then explore the optimal allocation strategies using optimization and optimal control theory. We show that the solutions to this dynamical problem have a complicated structure that includes a second-order singular arc given in feedback form and characterized by (i) Fuller's phenomenon and (ii) the turnpike effect, producing a very particular asymptotic behavior towards the solution of the static optimization problem. Our work thus provides a generalized perspective on the analysis of microbial growth by means of simple self-replicator models. }
}

@article{Ewald2017,
  author  = {Ewald, J. and Bartl, M. and Kaleta, C.},
  title   = {Deciphering the regulation of metabolism with dynamic optimization: An overview of recent advances},
  journal = {Biochemical Society Transactions},
  volume  = {45},
  year    = {2017},
  pages   = {1035--1043}
}

@inproceedings{GoelzerCDC2009,
  author    = {Goelzer, Anne and Fromion, Vincent and Scorletti, Guy},
  title     = {Cell design in bacteria as a convex optimization problem},
  booktitle = {Proceedings of the 48th {IEEE} Conference on Decision and Control},
  year      = {2009},
  pages     = {4517--4522},
  month     = dec
}

@article{GoelzerAutomatica2011,
  author  = {Goelzer, Anne and Fromion, Vincent and Scorletti, Guy},
  title   = {Cell design in bacteria as a convex optimization problem},
  journal = {Automatica},
  volume  = {47},
  number  = {6},
  year    = {2011},
  pages   = {1210--1218}
}

@book{BenTalNemirovski2001,
  author    = {Ben-Tal, Aharon and Nemirovski, Arkadi},
  title     = {Lectures on Modern Convex Optimization: Analysis, Algorithms, and Engineering Applications},
  series    = {MPS/SIAM Series on Optimization},
  publisher = {Society for Industrial and Applied Mathematics},
  address   = {Philadelphia, PA},
  year      = {2001}
}

@book{Nesterov2004,
  author    = {Nesterov, Yurii},
  title     = {Introductory Lectures on Convex Optimization: A Basic Course},
  publisher = {Kluwer Academic Publishers},
  address   = {Boston, MA},
  year      = {2004}
}

@article{GoelzerEtAl2015,
  author  = {Goelzer, Anne and Muntel, Jan and Chubukov, Victor and Jules, Matthieu and Prestel, Eckhard and N{\"o}lker, Rainer and Mariadassou, Mahendra and Aymerich, Simon and Hecker, Michael and Noirot, Philippe and Becher, D{\"o}rte and Fromion, Vincent},
  title   = {Quantitative prediction of genome-wide resource allocation in bacteria},
  journal = {Metabolic Engineering},
  volume  = {32},
  pages   = {232--243},
  year    = {2015}
}

@article{MoriEtAl2016,
  author  = {Mori, Matteo and Hwa, Terence and Martin, Olivier C. and De Martino, Andrea and Marinari, Enzo},
  title   = {Constrained allocation flux balance analysis},
  journal = {PLoS Computational Biology},
  volume  = {12},
  number  = {6},
  pages   = {e1004913},
  year    = {2016}
}

@article{OBrienEtAl2013,
  author  = {O'Brien, Edward J. and Lerman, Joshua A. and Chang, Richard L. and Hyduke, Daniel R. and Palsson, Bernhard {\O}.},
  title   = {Genome-scale models of metabolism and gene expression extend and refine growth phenotype prediction},
  journal = {Molecular Systems Biology},
  volume  = {9},
  number  = {1},
  pages   = {693},
  year    = {2013}
}

@article{ReimersEtAl2017,
  author  = {Reimers, Anne-Marie and Knoop, Henning and Bockmayr, Alexander and Steuer, Ralf},
  title   = {Cellular trade-offs and optimal resource allocation during cyanobacterial diurnal growth},
  journal = {Proceedings of the National Academy of Sciences},
  volume  = {114},
  number  = {31},
  pages   = {E6457--E6465},
  year    = {2017}
}

@article{GoelzerFromion2017,
  author  = {Goelzer, Anne and Fromion, Vincent},
  title   = {Resource allocation in living organisms},
  journal = {Biochemical Society Transactions},
  volume  = {45},
  number  = {4},
  pages   = {945--952},
  year    = {2017}
}

@article{EdwardsIbarraPalsson2001,
  author  = {Edwards, Jeremy S. and Ibarra, Roberto U. and Palsson, Bernhard {\O}.},
  title   = {In silico predictions of {Escherichia coli} metabolic capabilities are consistent with experimental data},
  journal = {Nature Biotechnology},
  volume  = {19},
  pages   = {125--130},
  year    = {2001}
}

@article{LewisEtAl2010,
  author  = {Lewis, Nathan E. and Hixson, Karl K. and Conrad, Tom M. and Lerman, Joshua A. and Charusanti, Piyush and Polpitiya, Arosha D. and Adkins, Joshua N. and Schramm, G{\"u}nter and Purvine, Samuel O. and Lopez-Ferrer, Daniel and Weitz, Kevin K. and Eils, Roland and K{\"o}nig, Rainer and Smith, Richard D. and Palsson, Bernhard {\O}.},
  title   = {Omic data from evolved {E}. coli are consistent with computed optimal growth from genome-scale models},
  journal = {Molecular Systems Biology},
  volume  = {6},
  pages   = {390},
  year    = {2010}
}

@book{AgrachevSachkov2013,
  author    = {Agrachev, Andrei A. and Sachkov, Yuri},
  title     = {Control Theory from the Geometric Viewpoint},
  publisher = {Springer Science \& Business Media},
  address   = {New York},
  year      = {2013}
}

@article {GoelzerEukaryotic750182,
	author = {Goelzer, Anne and Fromion, Vincent},
	title = {{RBA} for eukaryotic cells: foundations and theoretical developments},
	elocation-id = {750182},
	year = {2019},
	doi = {10.1101/750182},
	publisher = {Cold Spring Harbor Laboratory},
	abstract = {Resource allocation models were recently identified as new ways to investigate cell design principles. In particular, the Resource Balance Analysis (RBA) framework is the first constraint-based modelling method capable of accurate quantitative predictions of the genome-wide resource allocation. Initially developed and validated on bacteria, the objective of this paper is to provide the mathematical fundations of the extension of the RBA framework to eukaryotic cells. We especially investigate the way to handle the cellular compartments in order to formalize eventually the functioning of organelles. It turns out that the final RBA problem for eukaryotic cells is close to the one of prokaryotic cells from a theoretical point of view. The mathematical properties that were already identified on the prokaryotic RBA framework can be easily transposed to eukaryotic cells. In particular, the eukaryotic RBA problem can be solved easily at the cell scale by Linear Programming. This paves the way to future developments of RBA models for eukaryotic cells.},
	URL = {https://www.biorxiv.org/content/early/2019/08/29/750182},
	eprint = {https://www.biorxiv.org/content/early/2019/08/29/750182.full.pdf},
	journal = {bioRxiv}
}

@article{BoarottoSigalotti2019,
  author    = {Francesco Boarotto and Mario Sigalotti},
  title     = {Time-optimal trajectories of generic control-affine systems have at worst iterated Fuller singularities},
  journal   = {Annales de l'Institut Henri Poincaré C, Analyse Non Linéaire},
  volume    = {36},
  number    = {3},
  pages     = {693--714},
  year      = {2019},
  doi       = {10.1016/j.anihpc.2018.07.001}
}

@book{ZelikinBorisov1994,
  author    = {Mikhail I. Zelikin and Vladimir F. Borisov},
  title     = {Theory of Chattering Control with Applications to Astronautics, Robotics, Economics, and Engineering},
  publisher = {Birkhäuser},
  address   = {Boston},
  year      = {1994},
  doi       = {10.1007/978-1-4612-2702-1}
}

\end{document}